\let\cite\cite
\newcommand{\bb}[1]{\mathbb{#1}}
\newcommand{\fk}[1]{\mathfrak{#1}}
\newcommand{\mcal}[1]{\mathcal{#1}}
\newcommand{\C}{\bb C}
\newcommand{\Z}{\bb Z}
\newcommand{\Q}{\bb Q}
\newcommand{\N}{\bb N}
\newcommand{\F}{\bb F}
\newcommand{\OO}{\mcal O}
\newcommand{\Cp}{{\bb{C}}_{p}}
\newcommand{\Zp}{{\bb{Z}}_{p}}
\newcommand{\Qp}{{\bb{Q}}_{p}}
\newcommand{\set}[1]{\left\{{#1}\right\}}
\newcommand{\abs}[1]{\left|{#1}\right|}
\newcommand{\ot}{\otimes} 
\newcommand{\xto}{\xrightarrow}  
\newcommand{\To}{\longrightarrow}
\newcommand{\oln}[1]{\overline{#1}}
\newcommand{\wh}[1]{\widehat{#1}}
 \newcommand{\etl}[3]{H^{{#1}}_{\et}({#2},{#3})}
\def\Frac{\mathop{\mbox{\normalfont Frac}}\nolimits}
\def\spec{\mathop{\mbox{\normalfont Spec}}\nolimits}
\def\sing{\mathop{\mbox{\normalfont sing}}\nolimits}
\def\Ext{\mathop{\mbox{\normalfont Ext}}\nolimits}
\def\Rep{\mathop{\mbox{\normalfont\textbf{Rep}}}\nolimits}
\def\Gal{\mathop{\mbox{\normalfont Gal}}\nolimits}
\def\Fil{\mathop{\mbox{\normalfont Fil}}\nolimits} 
\def\Res{\mathop{\mbox{\normalfont Res}}\nolimits} 
\def\et{{\text{\'et}}}
\def\dR{{\text{dR}}}
\def\cris{{\text{cris}}}
\def\st{{\text{st}}}
\newtheorem{theorem}{Theorem}[section]
\newtheorem{proposition}[theorem]{Proposition}
\newtheorem{corollary}[theorem]{Corollary}
\theoremstyle{definition}
    \newtheorem{definition}[theorem]{Definition}
    \newtheorem{example}[theorem]{Example}
    \newtheorem{strategy}[theorem]{Strategy}
\theoremstyle{remark}
    \newtheorem{remark}[theorem]{Remark}
\begin{document}

\title{A Kulikov-type classification theorem for a one parameter family of K3-surfaces over a $p$-adic field and a good reduction criterion}


\author{Jesús Rogelio Pérez Buendía}
\address{CONACYT-CIMAT Mérida}
\email{rogelio.perez@cimat.mx}



\keywords{}

\date{July 2018}


\maketitle

\begin{abstract}
\label{sec:abstract}
In this paper, we prove a $p$-adic analogous of the Kulikov classification theorem~~\cite{Kulikov:1977aa,Persson:1981wp,MR0466149} for the central fiber of a degeneration of $K3$-surfaces in terms of the nilpotency degree of  the monodromy of the family.

Namely, let $X_K$ be a be a smooth, projective $K3$-surface over the $p$-adic field $K$, which has either, a minimal semistable model $X$ over $\mathcal O_K$, or combinatorial reduction. 
If we let $N_{st}$ be the Fontaine's monodromy operator on $D_{st}(H^2_{\et}(X_{\oln K},\Qp))$, then we prove that the degree of nilpotency of $N_{st}$ determines the type of the special fiber of $X$. As a consequence we give a criterion for the good reduction of the semistable $K3$-surface $X_K$  in terms of its $p$-adic representation $H^2_{\et}(X_{\oln K},\Qp)$, which is similar to the criterion of good reduction for $p$-adic abelian varieties and curves given by~\cite{Coleman:1999aa} and~\cite{Andreatta:2015aa}.

\end{abstract}

\section{Introduction} 
\label{sec:introduction}

We fix once and for all a prime number $p>3$. If $K$ is a field, we denote by $G_K=\Gal(\oln K/K)$ its absolute Galois group. 

The main object of study of this paper is the interplay between the geometry of algebraic varieties and their cohomology. In general it is known that the geometry of an algebraic variety over a field determines the various cohomology groups with their extra structure. For example if $X$ is a smooth, proper algebraic variety over the complex numbers $\C$, then the Hodge structure on its Betti cohomology is pure with determined weights. 
Similarly, if $X$ is a smooth, proper algebraic variety over a $p$-adic  field $K$\index{$p$-adic  field $K$}, then its $p$-adic   \'etale cohomology groups $V_i:= H^i_{\et}(X_{\oln K},\Qp)$  are $p$-adic  $G_K$-representations whose type is determined by the geometry of various integral models of $X$. For instance if $X$ has good reduction then the $V_i$'s are crystalline $G_K$ representations~\cite{MR1463696}, if $X$  has semistable reduction, then the $V_i$'s are semistable representations~\cite{MR2372150}, etc.

In general it is not true that the cohomology groups of an algebraic variety determine their geometric properties, however, for certain very special classes of varieties it has been known for some time that this might happen.

Here are some examples:

For Abelian varieties over $\C$,  we have the Torelli type theorem (see~\cite[ch 2]{Rohde2009}):

\begin{theorem}[Riemann]\label{thm:ianTorelli}
An abelian variety over $\C$ is determined by its periods. More precisely, if $A, A'$ are complex polarized abelian varieties, and we have an isomorphism of Hodge structures: $$\phi:H^1(A,\Z)\to H^1(A',\Z),$$ then the abelian varieties $A$ and $A'$ are isomorphic.
\end{theorem}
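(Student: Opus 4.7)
The plan is to reconstruct the complex abelian variety $A$ functorially from the integral Hodge structure on $H^{1}(A,\Z)$, so that the given Hodge isomorphism $\phi$ produces a biholomorphism $A \xto{\sim} A'$; by GAGA this is automatically algebraic.

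First I would write $A = V/\Lambda$ with $V = T_0 A$ a $g$-dimensional $\C$-vector space and $\Lambda \subset V$ a lattice of rank $2g$, and record the standard identifications $\Lambda \cong H_{1}(A,\Z)$ and $H^{1}(A,\Z) \cong \Hom_{\Z}(\Lambda,\Z)$. Dualizing the Hodge decomposition $H^{1}(A,\C) = H^{1,0} \oplus H^{0,1}$ yields a weight $-1$ decomposition
\[
H_{1}(A,\C) \;=\; \Lambda \otimes_{\Z}\C \;=\; H_{-1,0} \oplus H_{0,-1}
\]
into two $g$-dimensional subspaces interchanged by complex conjugation, and the natural projection $\Lambda\otimes_{\Z}\C \twoheadrightarrow H_{-1,0}$ restricts on $\Lambda \otimes \R$ to a $\C$-linear isomorphism with $V$. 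Thus from the pair $(H^{1}(A,\Z),\Fil^{\bullet})$ alone one recovers the complex vector space $V$, the embedding $\Lambda \hookrightarrow V$ realizing it as a lattice, and hence the complex torus $A = V/\Lambda$.

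Given $\phi$, its $\Z$-dual $\phi^{\vee}:\Lambda \to \Lambda'$ extends $\C$-linearly to $\Lambda\otimes\C \to \Lambda'\otimes\C$ which, because $\phi$ preserves the Hodge filtration, carries the $(-1,0)$-summand of $A$ to that of $A'$. Restricting yields a $\C$-linear isomorphism $V \xto{\sim} V'$ carrying $\Lambda$ onto $\Lambda'$, hence a biholomorphism $A \cong A'$ of complex tori; algebraicity follows from GAGA (or from the fact that every holomorphic map between complex abelian varieties sending $0$ to $0$ is a morphism of abelian varieties).

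The main obstacle is the role of the polarization. The statement as written does not explicitly demand that $\phi$ match the polarization classes; the reconstruction above uses only the Hodge filtration and produces an isomorphism of underlying complex tori, which for the plain conclusion $A \cong A'$ is enough. To strengthen the conclusion to an isomorphism of polarized abelian varieties one would add the compatibility of $\phi$ with the chosen Riemann forms on $\Lambda$ and $\Lambda'$; since a polarization is encoded by an integral $(1,1)$-class with positive-definite associated Hermitian form on $V$, and both conditions are preserved under Hodge-compatible isomorphisms, the reconstruction then transports the polarization automatically.
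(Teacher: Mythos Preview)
The paper does not supply a proof of this theorem: it is quoted in the introduction as classical background, with a reference to \cite[ch.~2]{Rohde2009}, and no argument is given. So there is nothing in the paper to compare your proposal against.

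That said, your argument is correct and is exactly the standard one: from the integral weight-one Hodge structure on $H^{1}(A,\Z)$ one recovers the lattice $\Lambda = H_{1}(A,\Z)$ together with the complex structure on $\Lambda\otimes\R$ (equivalently the quotient $\Lambda\otimes\C \to V$), hence the complex torus $V/\Lambda$; a Hodge isomorphism then transports this data and gives a biholomorphism $A\cong A'$, which is algebraic by GAGA. Your remark on the polarization is also accurate: the unadorned conclusion $A\cong A'$ needs only the Hodge filtration, while matching the polarizations requires in addition that $\phi$ respect the Riemann forms.
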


Moreover, if $A$ is an Abelian variety over a $p$-adic field $K$ we have the following theorem~\cite{MR1463696}\cite[theorem II.4.7]{Coleman:1999aa}\cite[corollary 5.3.4]{MR1804530}:
\begin{theorem}[Faltings, Coleman-Iovita, Breuil]\label{thm:GoodReductionIovitaColemanB}
	Let $H=H^1_{\et}(A_{\oln K},\Qp)$.
    \begin{itemize}
        \item $A$ has good reduction if and only if $H$ is a crystalline $G_K$-representation.
        \item $A$ has semistable reduction if and only if  $H$ is a semistable $G_K$-representation.
    \end{itemize}
\end{theorem}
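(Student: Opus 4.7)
The plan is to separate each equivalence into forward and backward directions. The forward implications---good reduction implies crystalline, semistable reduction implies semistable---are instances of the general $p$-adic comparison theorems. Given a smooth proper model $\mcal{A}/\OK$ of $A$, Faltings's comparison isomorphism
$$H^1_{\et}(A_{\oln K},\Qp)\otimes_{\Qp} B_{\cris}\cong H^1_{\cris}(\mcal{A}_s/W)\otimes_W B_{\cris}$$
exhibits $H^1_{\et}$ as a crystalline $\Gk$-representation; the semistable analogue (Tsuji, Faltings) gives the corresponding comparison with $B_{\st}$ whenever $A$ admits a semistable model $\mcal{A}/\OK$, using log-crystalline cohomology on the special fiber equipped with its canonical log structure. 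This half is essentially formal once the comparison theorems are granted.

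For the converse directions the plan is to translate the Galois-theoretic hypothesis on cohomology into a statement about the $p$-divisible group $A[p^\infty]$. By Grothendieck's semistable reduction theorem $A$ acquires semistable reduction over some finite extension $L/K$, so that over $L$ the connected component of the N\'eron model is an extension of an abelian scheme by a torus; what must be shown is that no extension is needed and, in the good reduction case, that the toric part is trivial. Here I would invoke Breuil's equivalence in the semistable case (and Fontaine--Laffaille theory in the crystalline one), which realizes $p$-divisible groups over $\OK$ as an essentially full subcategory of the category of $\Gk$-stable lattices inside semistable (resp.\ crystalline) representations. A crystalline $H^1_{\et}(A_{\oln K},\Qp)$ therefore furnishes a $p$-divisible group $\mcal{G}/\OK$ whose generic fiber is $A[p^\infty]$; Tate's full faithfulness theorem for the generic-fiber functor on $p$-divisible groups, combined with a N\'eron--Ogg--Shafarevich type criterion for extending the abelian variety once its $p$-divisible group extends, then promotes $\mcal{G}$ to an abelian scheme model of $A$. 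The semistable case is the same argument modulo replacing ``abelian scheme'' by ``semistable N\'eron model'' and using Breuil's strongly divisible modules in place of Fontaine--Laffaille lattices.

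The principal obstacle is this last transition, from a purely Galois-theoretic integrality condition to the existence of an integral geometric model. All the real weight of the theorem sits in Breuil's classification of $p$-divisible groups over $\OK$ in terms of strongly divisible modules---the technical heart of~\cite{MR1804530}---without which the cohomological hypothesis does not visibly constrain the geometry of $A$. The overall argument thus proceeds by three successive refinements: from $\Gk$-representations to $p$-divisible groups over $\OK$ via integral $p$-adic Hodge theory, from $p$-divisible groups to formal completions of N\'eron models via Tate, and finally from formal to algebraic models by Grothendieck--Raynaud-type algebraization.
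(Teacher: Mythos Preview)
The paper does not prove this theorem; it is quoted in the introduction as a known result, attributed to Faltings, Coleman--Iovita, and Breuil via the citations~\cite{MR1463696}, \cite[theorem~II.4.7]{Coleman:1999aa}, and \cite[corollary~5.3.4]{MR1804530}, and serves purely as motivation for the analogous $K3$ result that the paper actually establishes. There is therefore no in-paper proof to compare your proposal against.

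That said, your sketch is broadly in line with how the cited references proceed. The forward implications are indeed consequences of the crystalline and semistable comparison theorems. For the converse, your reduction to $p$-divisible groups via integral $p$-adic Hodge theory is essentially Breuil's route in~\cite{MR1804530}; note, however, that Fontaine--Laffaille theory is only available in the absolutely unramified case, so for general $K$ one really does need Breuil's strongly divisible modules (or, later, Kisin's theory) even in the crystalline situation. It is also worth remarking that the Coleman--Iovita argument in~\cite{Coleman:1999aa} follows a rather different path, via $p$-adic uniformization and an explicit period map for abelian varieties with semistable reduction, rather than through a classification of $p$-divisible groups; both approaches reach the same conclusion, but the mechanisms are distinct.
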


The class of $K3$-surfaces is another very interesting class of algebraic varieties which resembles the class of Abelian varieties. More precisely, they satisfy a Torelli theorem~\cite{Looijenga:1980vy}:

    \begin{theorem}[Weak Torelli Theorem]\label{thm:TorelliK3Weak}
    Two complex $K3$-surfaces $X$, $X'$ are isomorphic if and only if there exists an isometry from $H^2(X,\Z)$ to $H^2(X',\Z)$ which sends $H^{2,0}(X,\C)$ to $H^{2,0}(X',\C)$. 
    \end{theorem}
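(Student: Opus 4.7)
The forward implication is immediate: an isomorphism $f: X \to X'$ of K3 surfaces pulls back to an isometry $f^*: H^2(X', \Z) \to H^2(X, \Z)$ (the intersection form is preserved by pullback on a surface) that respects the Hodge decomposition, and in particular sends $H^{2,0}(X', \C)$ to $H^{2,0}(X, \C)$.

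For the converse direction, my plan is to reduce the statement to the \emph{Strong Torelli theorem}, which asserts that any Hodge isometry $\phi: H^2(X, \Z) \to H^2(X', \Z)$ sending the K\"ahler cone $\mathcal{K}_X$ to the K\"ahler cone $\mathcal{K}_{X'}$ is induced by a unique isomorphism $X' \to X$. Given a general Hodge isometry $\phi$, the task is to adjust it by Hodge self-isometries of $H^2(X', \Z)$ until the resulting map is ``effective'' in the above sense. First, $\phi$ restricts to an isometry $H^{1,1}(X, \R) \to H^{1,1}(X', \R)$ and preserves the quadratic form; the positive cone in $H^{1,1}(X', \R)$ has two connected components, so replacing $\phi$ by $-\phi$ if necessary arranges that $\phi$ carries the component containing $\mathcal{K}_X$ to the component containing $\mathcal{K}_{X'}$. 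Next, the Weyl group $W_{X'}$ generated by reflections in effective $(-2)$-classes acts on this component with the closure of $\mathcal{K}_{X'}$ as a fundamental domain; composing $\phi$ with a suitable element of $W_{X'}$ then produces a Hodge isometry sending $\mathcal{K}_X$ to $\mathcal{K}_{X'}$, to which the Strong Torelli theorem applies to yield the desired isomorphism.

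The main obstacle is of course Strong Torelli itself. Its proof combines three deep inputs: surjectivity of the period map from the moduli of marked K3 surfaces onto the period domain $\Omega \subset \mathbb{P}(\Lambda_{K3} \otimes \C)$ (Todorov, Siu, Looijenga), the local Torelli theorem of Griffiths showing that the period map is a local isomorphism, and a global fiber analysis identifying non-separated points of the marked moduli with the ambiguity due to Hodge self-isometries of the K3 lattice, with the K\"ahler cone distinguishing the geometrically meaningful marking. A complete argument would typically handle the algebraic (polarized) case first via the Pjatetski-Shapiro--Shafarevich argument, and then extend to all K3 surfaces using density of algebraic K3 surfaces in the moduli space together with local Torelli to propagate isomorphisms across deformations.
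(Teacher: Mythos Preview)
The paper does not actually prove this theorem: it is stated in the introduction as background, with a citation to \cite{Looijenga:1980vy}, and no argument is given. So there is nothing in the paper to compare your proposal against.

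That said, your outline is the standard route to Weak Torelli and is correct in its broad strokes. The reduction to Strong Torelli by first flipping sign to land in the correct component of the positive cone, and then composing with an element of the Weyl group generated by reflections in $(-2)$-classes to carry the image into the K\"ahler cone, is exactly how one passes from an arbitrary Hodge isometry to an effective one. One small point worth tightening: the Weyl group action has the \emph{closure} of the K\"ahler cone as a fundamental domain, so a priori you only arrange that $\phi(\mathcal{K}_X)$ meets $\overline{\mathcal{K}_{X'}}$; you then need the observation that a Hodge isometry preserving the set of $(-2)$-classes and sending some K\"ahler class to a class on the boundary of $\mathcal{K}_{X'}$ would force that class to be orthogonal to an effective $(-2)$-curve, which is impossible for an interior K\"ahler class on the source side. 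Beyond that, your summary of the inputs to Strong Torelli (local Torelli, surjectivity of the period map, and the analysis of non-Hausdorff points in the moduli of marked K3 surfaces) is accurate, though of course each of these is itself a substantial theorem that would need to be invoked rather than reproved.
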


Also, if $\mathcal X\to \Delta$ is a degeneration of $K3$-surfaces over the open unit complex disk $\Delta$, we have the important theorem of~\cite[theorem II]{Kulikov:1977aa} and~\cite{Persson:1981wp,MR0466149} (see also theorem \eqref{thm:KulikovClassification}).

    \begin{theorem}[Kulikov-Persson-Pinkham]
    Let $\pi:\mathcal X \to \Delta$ be a semistable degeneration of $K3$-surfaces with all components of the central fiber $\mathcal X_0=\pi^{-1}(0) = \bigcup V_i$ algebraic. Let $N=\log T:H^2(\mathcal X_t,\Z)\to H^2(\mathcal X_t,\Z)$ be the monodromy operator. After birational modification we may assume that $\pi:\mathcal X\to \Delta$  is a Kulikov model (definition~\eqref{def:kulikovmodel}). Then the central fiber $\mathcal X_0$ is one of the following:
    \begin{enumerate}
        \item[I.] $\mathcal X_0$ is a $K3$-surface and $N=0$.
        \item[II.] $\mathcal X_0=V_0\cup V_1\cdots V_r$, where $V_0,V_r$ are smooth rational, and $V_1, \ldots, V_{r-1}$ are smooth elliptic ruled and $V_i\cap V_j\ne \emptyset$ if and only if $j=i\pm 1$. $V_i\cap V_j$ is then a smooth elliptic curve and a section of the ruling on $V_i$, if $V_i$ is elliptic ruled. $N\ne0$ but $N^2=0$.
        \item[III.] $\mathcal X_0=\cup V_i$, with each $\oln V_i$ smooth rational and all double curves are cycles of rational curves. The dual graph $\Gamma$ is a triangulation of $S^2$. In this case $N^2\ne 0$, but $N^3=0$.
    \end{enumerate}
    \end{theorem}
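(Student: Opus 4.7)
The plan is to combine the Clemens--Schmid exact sequence with the Kulikov condition $K_{\mathcal X}\cong \mathcal O_{\mathcal X}$ to read off the combinatorial type of $\mathcal X_0$ from the nilpotency index of $N$. First I would set up the limit mixed Hodge structure $(H^2_{\lim}, W_\bullet, F^\bullet)$ on $H^2(\mathcal X_t,\Q)$ and recall Schmid's monodromy weight theorem: $N$ shifts the weight filtration by $-2$ and the $N$-primitive decomposition identifies $\mathrm{Gr}^W_{2+k}$ with the image of $N^k$. Since $H^2$ of a K3 has weight $2$, a priori $N^3=0$, and the three possible nilpotency indices correspond exactly to whether $\mathrm{Gr}^W_0=0$, $\mathrm{Gr}^W_0=0$ and $\mathrm{Gr}^W_4=0$ fails, or $\mathrm{Gr}^W_4\ne 0$.

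Next I would identify the graded pieces with the cohomology of the dual complex $\Gamma$ of $\mathcal X_0$ via the weight spectral sequence of Steenbrink. Concretely $\mathrm{Gr}^W_0 H^2_{\lim}\cong H^2(\Gamma,\Q)$ and $\mathrm{Gr}^W_4 H^2_{\lim}\cong H^0(D^{[2]},\Q)/\text{image}$, from which one translates:
\begin{itemize}
\item $N=0 \Longleftrightarrow \mathrm{Gr}^W_0=\mathrm{Gr}^W_4=0$, so $\Gamma$ is $0$-dimensional and $\mathcal X_0$ is irreducible;
\item $N\ne 0$, $N^2=0 \Longleftrightarrow \Gamma$ is at most $1$-dimensional;
\item $N^2\ne 0 \Longleftrightarrow \Gamma$ is $2$-dimensional and, by a Hodge-theoretic computation using that $\mathrm{Gr}^W_4$ is one-dimensional (since $h^{2,0}=1$), $\Gamma$ is a $\Q$-homology $2$-sphere.
\end{itemize}

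The second stage is to pin down the geometry of the components using the Kulikov hypothesis. Triviality of $K_{\mathcal X}$ together with adjunction gives $K_{V_i} = -\sum_{j\ne i}(V_i\cap V_j)$, so $K_{V_i}$ is anti-effective and supported on the double locus. In Type I, $\mathcal X_0$ is its own double locus's complement, so $K_{\mathcal X_0}=0$ and by smoothness $\mathcal X_0$ is a K3. In Type II, the chain structure of $\Gamma$ is forced by the dual-graph being $1$-dimensional with trivial $H^1$; the end components $V_0,V_r$ meet only one neighbor, so $-K_{V_i}$ is a smooth elliptic curve, which by Castelnuovo/classification forces $V_0,V_r$ rational (a smooth rational surface with an anticanonical elliptic curve), and the interior components $V_1,\ldots, V_{r-1}$ have $-K_{V_i}$ equal to two disjoint elliptic curves, forcing them to be elliptic ruled with the double curves as sections. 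In Type III, the triangulation of $S^2$ structure together with the triple-point formula forces each $V_i$ to be rational with cycle-of-rational-curves anticanonical divisor, which is precisely an anti-canonical pair of the needed shape.

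The main obstacle is the Type II and Type III component classification, which is not purely cohomological: it requires the Kulikov condition and the triple point formula $(D_{ij}^2)_{V_i}+(D_{ij}^2)_{V_j}+D_{ij}\cdot\sum_{k\ne i,j}V_k=0$ to rule out extraneous components and to force the dihedral/chain structure. In particular, one must check that the configuration of components is connected along the pattern dictated by $\Gamma$, and that no smooth K3 components can occur in Types II/III; this is done by observing that a K3 component would have trivial canonical, hence by adjunction would not meet any other component, contradicting the nontriviality of $\Gamma$. Finally, birational modifications (elementary transformations along $\mathbb P^1$-bundles, flops along $(-1,-1)$-curves on double curves) are invoked to bring an arbitrary semistable model into Kulikov form without changing the monodromy, which is the standard minimal model program input needed to reduce to the Kulikov case.
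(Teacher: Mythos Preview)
The paper does not actually prove this theorem: it is quoted as a classical result of Kulikov and Persson--Pinkham, and the only comment the paper makes about its proof is the single sentence ``The proof of these results uses the Clemens--Schmid exact sequence,'' with a pointer to Morrison's survey and Friedman. Your sketch is exactly an outline of that classical argument --- monodromy weight filtration via Clemens--Schmid/Steenbrink, identification of the extremal graded pieces with the cohomology of the dual complex, and then the component-by-component surface classification using adjunction from $K_{\mathcal X}\cong\mathcal O_{\mathcal X}$ and the triple point formula --- so you are on the same track the paper is gesturing at, only fleshed out.

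A couple of small points to tighten. In your trichotomy bullet list there is a typo (``$\mathrm{Gr}^W_0=0$, $\mathrm{Gr}^W_0=0$ and $\mathrm{Gr}^W_4=0$ fails''); presumably you mean the case $\mathrm{Gr}^W_4=0$ but $\mathrm{Gr}^W_3\ne 0$ (equivalently $H^1(\Gamma)\ne 0$ or the double curves carry nontrivial $H^1$). More substantively, in Type~II you assert that the dual graph is a chain because it is $1$-dimensional with trivial $H^1$; that only gives a tree. The passage from tree to chain uses the surface classification: once adjunction forces the double curves to be elliptic and the interior components to be elliptic ruled with the double curves as sections, an elliptic ruled surface has at most two disjoint sections in its anticanonical class, so each interior vertex has valence exactly two. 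You have the ingredients for this, but the logical order should be: first adjunction and classification of components, then deduce the chain shape of $\Gamma$, not the other way around.
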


\begin{remark}\label{rem:ZeroMonodromy}
Note that in particular $\mathcal X_0$ is smooth if and only if $N=0$.
\end{remark}
   
 \begin{remark}\label{p-adicfield}
 	Fix a prime number $p>3$ and let now $K$ be a $p$-adic field, that is, a complete discrete valuation field of characteristic 0, with valuation ring $\mathcal O_K$ and perfect residue field $k$ of characteristic $p$. Typically $K$ will be a finite extension of $\Qp$.  
 \end{remark}

In this paper we prove the following theorem (section~\ref{sec:the_main_theorem_v2} theorem~\eqref{thm:MainTheorem}):

\begin{theorem}
Let $X_K$ be a smooth projective $K3$-surface over $K$, and let $X\to\spec{\mathcal O_K}$ be a semistable minimal model of $X_K$ (or we can just assume that $X_K$ has combinatorial reduction as in definition~\eqref{def:sncl-reduction}). Let ${\oln X}$ be the special fiber of $X$. We denote $D_{st}=D_{st}( H^2_{\et}(X_{\oln K},\Qp))$ and let $N_{st}:D_{st}\to D_{st}$
be the monodromy operator on $D_{st}$. Then we have $3$ possibilities for the special fiber ${\oln X}$, distinguished in terms of the nilpotency degree of the monodromy operator $N_{st}$, as follows:
\begin{itemize}
    \item[I.]  $N_{st}=0$ if and only if
     ${\oln X}$ is a nonsingular $K3$ surface.
    \item[II.]  $N_{st}\ne 0$ but $N^2_{st}=0$ if and only if
     ${\oln X}= \cup_{i=1}^n V_i$ where the $V_i$ are rational surfaces and $V_2, \ldots, V_{n-1}$ are elliptic ruled surfaces.
    \item[III.] $N_{st}^2\ne 0$ but $N_{st}^3=0$ if and only if
     ${\oln X}= \cup_{i=1}^n V_i$ where all the $V_i$ are rational surfaces.
\end{itemize}
That is, the type of the combinatorial special fiber $\oln X$ can be distinguish in terms of the monodromy operator $N_{st}$ in exactly the same way as in \cite{MR0466149} and \cite{Kulikov:1977aa} for the complex case.
\end{theorem}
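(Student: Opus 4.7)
The plan is to exploit the $p$-adic weight spectral sequence of Mokrane, the $p$-adic analogue of the Rapoport--Zink/Steenbrink spectral sequence. For a strictly semistable model $X$, this sequence converges to the log-crystalline cohomology of $\oln X/W$ and has $E_1$-terms given as direct sums of the cohomology groups of the strata $\oln X^{(j)}$, the disjoint unions of $j$-fold intersections of the irreducible components $V_i$. Via Tsuji's comparison isomorphism the sequence computes $D_{st}(H^2_{\et}(X_{\oln K},\Qp))$ and identifies $N_{st}$, up to a Tate twist, with Mokrane's monodromy on the $E_1$-page, which acts as a shift of bidegree $(2,-2)$. Consequently $D_{st}$ acquires an increasing weight filtration $W_\bullet$ whose graded pieces $\mathrm{gr}^W_k$ for $k=0,\ldots,4$ are explicit combinations of $H^0(\oln X^{(3)})$, $H^1(\oln X^{(2)})$, $H^2(\oln X^{(1)})$ and their Tate twists.

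Next I invoke the monodromy--weight conjecture, known for a semistable surface by work of Mokrane and Nakkajima, to obtain an isomorphism $N_{st}^{r}\colon \mathrm{gr}^W_{2+r} D_{st} \xrightarrow{\sim} \mathrm{gr}^W_{2-r} D_{st}(-r)$ for every $r\ge 0$. Therefore the nilpotency degree of $N_{st}$ equals $1$ plus the largest $r$ for which $\mathrm{gr}^W_{2+r} D_{st}$ is nonzero. Reading this off the $E_1$-page: $N_{st}=0$ iff both $\mathrm{gr}^W_3$ and $\mathrm{gr}^W_4$ vanish iff the singular locus of $\oln X$ is empty, so that $\oln X$ is smooth; $N_{st}\ne 0$ but $N_{st}^2=0$ iff only $\mathrm{gr}^W_4\cong H^0(\oln X^{(3)})$ vanishes, i.e.\ $\oln X$ has double curves but no triple points (the dual complex is a $1$-dimensional chain); and $N_{st}^2\ne 0$ but $N_{st}^3=0$ iff $\mathrm{gr}^W_4\ne 0$, i.e.\ triple points are present and the dual complex is $2$-dimensional.

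The remaining step is to feed this combinatorial dichotomy into the Kulikov-type geometric classification of the possible special fibers, which is established earlier in the paper. Since $X$ is a minimal semistable model of a $K3$-surface, $K_X$ is trivial; adjunction then forces each double curve $V_i\cap V_j$ to be anticanonical on both $V_i$ and $V_j$, and combined with the combinatorial constraint on the dual complex the components $V_i$ are classified as follows: if $\oln X$ is smooth it must be a $K3$-surface (Type~I); if the dual complex is a chain, the two end components are rational surfaces and the interior ones elliptic ruled, with elliptic double curves (Type~II); if the dual complex triangulates $S^2$, every component is rational and the double curves form cycles of rational curves on each $V_i$ (Type~III). \emph{The main obstacle} will be this last geometric step: while the cohomological side is essentially formal once Mokrane's sequence and the monodromy--weight property are in place, the classification of the components over a $p$-adic base requires a careful adjunction and minimality analysis without the analytic tools available over $\C$, and one must verify that the hypothesis $p>3$ suffices to rule out pathologies in small characteristic.
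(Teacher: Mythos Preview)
Your approach is sound but takes a genuinely different route from the paper. The paper never invokes Mokrane's $p$-adic weight spectral sequence or the $p$-adic weight--monodromy property. Instead it (i) constructs, via Nakkajima's log-deformation theory (theorem~\ref{thm:liting of semistable K3}), a family $\mathcal X\to\spec{W[[t]]}$ whose characteristic-zero fiber at $t=0$ has the same combinatorial type as $\oln X$; (ii) identifies $N_{st}$ with the residue $N_p$ of the Gauss--Manin connection on this family via the comparison isomorphism; (iii) base-changes along an embedding $K_0\hookrightarrow\C$ and uses Artin approximation to produce an analytic family over the complex unit disk whose monodromy $N_{an}$ equals $N_p\otimes\C$; and (iv) applies the classical complex Kulikov--Persson--Pinkham theorem (via the Clemens--Schmid sequence) to $N_{an}$. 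Your route is essentially that of Hern\'andez-Mada and Chiarellotto--Lazda (cf.\ remark~\ref{rem:otherprofs}): it stays entirely inside the $p$-adic world and avoids both the $W[[t]]$-lifting and the transcendental passage to $\C$, whereas the paper's method buys a \emph{literal} reduction to the complex theorem rather than a parallel $p$-adic argument.

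One point in your sketch needs tightening: the equivalence ``$\mathrm{gr}^W_3=\mathrm{gr}^W_4=0\Leftrightarrow\oln X$ smooth'' (and its companions) is not valid a priori, since a configuration with only rational double curves and no triple points would kill both graded pieces without $\oln X$ being smooth. The logic must run the other way: first invoke Nakkajima's classification (proposition~\ref{prop:SNCLCombinatorial}) to know that $\oln X$ is already of Type~I, II, or III, and then compute for each type the relevant graded pieces (Type~II has elliptic double curves, so $\mathrm{gr}^W_3\ne 0$; Type~III has dual complex a sphere, so $\mathrm{gr}^W_4\ne 0$). With that reordering the argument is complete, and the ``main obstacle'' you anticipate is in fact already disposed of by that proposition.
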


We prove it using a novel method that can be applied to a more general class of varieties as follows:

\begin{strategy}
Let us suppose that $\mathcal A$ is a class of varieties over various fields satisfying the following two conditions:
	\begin{enumerate}
	    \item[C.I] If $X$ is a scheme over $\mathcal O_K$ such that its generic fiber $X_K$ is a smooth, proper variety in $\mathcal A$ and its special fiber $\oln X$ is a semistable variety over $k:=\mathcal O_K/\fk m_K$, then $\oln X$, a logarithmic scheme (with the natural logarithmic structure), has global deformations over $W(k)[[t]]$ of the type described in theorem~\eqref{thm:liting of semistable K3}.
	    \item[C.II] If $Y$ is a family of varieties in $\mathcal A$ over the complex open unit disk $\Delta $, degenerating exactly at $0$, then there is a Kulikov-type theorem saying that: the family is smooth if and only if the monodromy operator of the logarithmic cohomology of its special fiber vanishes, or more generally, if the monodromy operator classify the type of the generic fiber.

	    Then, following the same steps as in section~\eqref{sec:the_main_theorem_v2} below, one would be able to prove a similar classification theorem of type (C.II), for a variety $X_K$ in $\mathcal A$ over a $p$-adic field $K$ as in (C.I).
	\end{enumerate}
\end{strategy}

In this paper we prove that indeed the class of $K3$-surfaces satisfies these properties, and then we conclude the main theorem~\eqref{thm:MainTheorem} in page~\pageref{thm:MainTheorem}.  

As a corollary (theorem~\eqref{thm:MainTheoremCrystalline}) we give a good reduction criterion for $K3$-surfaces:

\begin{theorem}\label{thm:MainIntro}
The $K3$- surface $X_K$ has good reduction if and only if $H^2_{\et}(X_{\oln K},\Qp)$ is a crystalline $G_K$-representation.
\end{theorem}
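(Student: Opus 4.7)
The plan is to deduce this criterion directly from the main classification theorem~\ref{thm:MainTheorem}, together with the elementary dictionary in Fontaine's theory between semistable and crystalline $p$-adic representations. Both implications become essentially formal once theorem~\ref{thm:MainTheorem} is in hand.

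For the forward direction, if $X_K$ has good reduction then by definition it extends to a smooth and proper model $X/\mathcal O_K$. The crystalline comparison theorem of Faltings and Tsuji applied to this model then exhibits $H^2_{\et}(X_{\oln K},\Qp)$ as a crystalline $G_K$-representation; this is the standard half and requires no new input.

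For the converse, assume that $V:=H^2_{\et}(X_{\oln K},\Qp)$ is crystalline. By hypothesis $X_K$ admits a semistable minimal model $X/\mathcal O_K$ (or more generally has combinatorial reduction as in definition~\ref{def:sncl-reduction}), so by Tsuji's semistable comparison theorem $V$ is semistable and $D_{st}(V)$ is identified with the logarithmic crystalline cohomology of the special fiber $\oln X$. Recall that $D_{\cris}(V)=D_{st}(V)^{N_{st}=0}$, so that a semistable representation is crystalline precisely when its Fontaine monodromy operator $N_{st}$ vanishes identically on $D_{st}$. The crystalline hypothesis on $V$ therefore forces $N_{st}=0$. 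Invoking theorem~\ref{thm:MainTheorem}, this is exactly case I, which forces $\oln X$ to be a nonsingular $K3$-surface; hence $X/\mathcal O_K$ is itself smooth and $X_K$ has good reduction.

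The entire difficulty of this corollary is thus absorbed into the statement of theorem~\ref{thm:MainTheorem}: the main obstacle is not in the present deduction but in proving the classification theorem itself, which requires both a $p$-adic lifting result of Kulikov type (condition C.I in the strategy above) and a careful matching between Fontaine's operator $N_{st}$ on $D_{st}$ and the geometric monodromy appearing in the complex analytic Kulikov--Persson--Pinkham classification. Once those inputs are available, the N\'eron--Ogg--Shafarevich-type statement for $K3$-surfaces recorded here is immediate.
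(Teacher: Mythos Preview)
Your proposal is correct and follows essentially the same argument as the paper's own proof of theorem~\ref{thm:MainTheoremCrystalline}: the forward direction is Faltings' $C_{\cris}$ theorem, and the converse combines the fact that crystalline means semistable with $N_{st}=0$ (remark~\ref{rem:st-cris}) together with case~I of theorem~\ref{thm:MainTheorem}. The only cosmetic difference is that the paper invokes its own comparison isomorphism (theorem~\ref{thm:comparisonIovitaAndreatta}, via \cite{Andreatta:2012aa}) rather than Tsuji's to know $V$ is semistable, but this does not change the logic.
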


This theorem~\eqref{thm:MainTheoremCrystalline} generalizes the analogous result of \cite[theorem II.4.7]{Coleman:1999aa} for abelian varieties. Which is also analogous to the result of \cite[theorem 1.6]{Andreatta:2015aa} for curves below. 

As we have mentioned before, the cohomology does not always determine the geometry of the algebraic varieties. For example,
it is known that the geometry of curves is not determined by the structure of their cohomology groups. Nevertheless, their geometry is determined by the quotients of their unipotent fundamental groups as follows~\cite{Andreatta:2015aa}:

Let $K$ be a finite extension of $\Qp$ and suppose that $X_K$ is a curve with semistable reduction. Assume also that the genus of $X_K$ is larger or equal to $2$. For a fix geometric point $b$ of $X_K$ let, for every prime $l$,  $\pi^{(l)}$ be the maximal pro-$l$ quotient of the geometric  \'etale fundamental group $\pi_1(X_{\oln K},b)$ of $X_K$ and let $\set{\pi_1^{(l)}[n]}_{n\geq 1}$ be  the lower central series of $\pi_1^{(l)}$.  

\begin{theorem}[Oda]\label{thm:Oda'stheorem}\cite[theorem 3.2]{MR1348768}
$X_K$ has good reduction if and only if for some prime integer $l\ne p$ the outer representations $\pi_1^{(l)}/\pi_1^{(l)}[n]$ are unramified for all $n>1$.
\end{theorem}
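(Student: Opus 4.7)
The statement is an ``if and only if,'' and my plan splits along the two implications, with the reverse direction being the substantive content.

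For the forward direction I would assume good reduction and produce unramified quotients at every level. Fix a smooth proper model $\mathfrak{X}/\OK$ of $X_K$. The specialization sequence of Grothendieck (SGA~1) gives a surjection $\pi_1(X_{\oln K},b)\twoheadrightarrow \pi_1(\mathfrak{X}_{\oln k},\oln b)$ that becomes an isomorphism on every pro-$l$ quotient for $l\ne p$, and on the right the $G_K$-action factors through $\Gal(\oln k/k)$. Pushing this through to $\pi_1^{(l)}/\pi_1^{(l)}[n]$ makes the outer representation unramified for all $n>1$.

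For the converse I would invoke the semistable hypothesis to get a stable model $\mathfrak{X}/\OK$ with special fiber $\mathfrak{X}_k$, and analyze its dual graph $\Gamma$ in two stages. At level $n=2$, $\pi_1^{(l)}/\pi_1^{(l)}[2]$ is the Tate module of $\Pic^0(X_K)$, so unramifiedness is equivalent (by N\'eron--Ogg--Shafarevich) to good reduction of the Jacobian, i.e.\ to the toric rank of $\Pic^0(\mathfrak{X}_k)$ being zero, i.e.\ to $\Gamma$ being a tree with no self-loops. This is insufficient on its own, since trees of rational components glued at nodes realize good reduction of the Jacobian without good reduction of the curve. The higher quotients must be used to collapse $\Gamma$ further to a single vertex. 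My plan is to pass to the log scheme $(\mathfrak{X}_k)^{\lg}$ and its log-\'etale pro-$l$ fundamental group, which receives the specialization map from $\pi_1^{(l)}(X_{\oln K})$ and carries a weight filtration whose graded pieces are free Lie words on $H^1_{\et}((\mathfrak{X}_k)^{\lg},\Z_l)$. The monodromy operator (a generator of tame inertia) shifts this filtration by $2$, and commutators at depth $\geq 3$ should pair classes dual to components with classes dual to nodes in a way that detects every remaining edge of $\Gamma$. Unramifiedness at every $n$ would then kill monodromy on every graded piece and collapse $\Gamma$ to a single vertex; combined with the $n=2$ step, $\mathfrak{X}_k$ is then smooth and $X_K$ has good reduction.

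The hard part will be the third paragraph: building a canonical, Galois-equivariant weight filtration on $\pi_1^{(l)}/\pi_1^{(l)}[n]$, identifying its graded pieces with explicit Lie-algebraic data attached to $\Gamma$ and the components of $\mathfrak{X}_k$, and proving a nondegeneracy statement ensuring that the combinatorics of $\Gamma$ genuinely obstructs unramifiedness at some finite level. This is also where the hypothesis $\mathrm{genus}(X_K)\geq 2$ enters, guaranteeing enough Lie brackets to see the structure, and it is the technical core of Oda's original argument and its later log-geometric recastings.
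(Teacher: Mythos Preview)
The paper does not prove this statement. Theorem~\ref{thm:Oda'stheorem} is quoted in the introduction purely as background, with a citation to Oda's original article, and no argument for it appears anywhere in the text; it serves only to motivate the analogous $p$-adic criterion of Andreatta--Iovita--Kim and, ultimately, the author's own good-reduction criterion for $K3$-surfaces. So there is no ``paper's own proof'' to compare your proposal against.

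That said, your outline is a reasonable sketch of the shape such a proof takes. The forward direction via the SGA~1 specialization isomorphism on pro-$l$ completions is standard and correct. For the converse, your two-stage plan---first using $n=2$ and N\'eron--Ogg--Shafarevich to force the dual graph $\Gamma$ to be a tree, then using deeper quotients to collapse $\Gamma$ to a point---matches the overall architecture of Oda's argument and its later reformulations. The honest gap you flag yourself: the ``nondegeneracy'' step, showing that a nontrivial tree with at least one node genuinely forces ramification on some $\pi_1^{(l)}/\pi_1^{(l)}[n]$, is exactly the heart of the matter, and your paragraph does not yet supply the mechanism. In Oda's approach this comes from an explicit computation with the weight filtration on the pro-$l$ fundamental group and the action of a tame inertia generator on the associated graded Lie algebra; one shows that a node contributes a Dehn-twist-type element whose image in a suitable graded piece is nonzero. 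Your invocation of the log-\'etale fundamental group and a weight filtration is the right framework, but as written it is a plan rather than a proof: you would still need to produce the explicit nonvanishing bracket (typically at level $n=3$) and justify why the genus $\geq 2$ hypothesis guarantees enough room for it.
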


The following theorem is a $p$-adic  analogue of theorem of Oda above.

\begin{theorem}[Andreatta-Iovita-Kim]
	\label{thm:Iovita-Adrquotienteatta-Kimm}
	\cite[theorem 1.6]{Andreatta:2015aa}
If $G^{\et}$ denotes the unipotent $p$-adic   \'etale fundamental group of $X_{\oln K}$ for the base point $b$, then $X_K$ has good reduction if and only if $G^{\et}$ is a crystalline $G_K$-representation.
\end{theorem}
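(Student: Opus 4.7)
The plan is to mirror the two-step structure used in this paper for $K3$-surfaces: translate the crystalline property of $G^{\et}$ into the vanishing of a monodromy operator on a log-crystalline unipotent fundamental group of the special fiber $\oln X$ of a chosen semistable model $X/\mathcal O_K$ of $X_K$, and then apply an Oda-type complex-analytic statement to deduce that $\oln X$ is smooth. In this analogy, Oda's theorem plays the role played by the Kulikov classification in the $K3$ case, and a non-abelian version of Olsson--Vologodsky $p$-adic Hodge theory plays the role of the semistable comparison theorem of Tsuji.

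For the easy direction ($\Rightarrow$), if $X_K$ extends to a smooth proper curve $X/\mathcal O_K$, then one invokes non-abelian crystalline $p$-adic Hodge theory to compare $G^{\et}$ with the unipotent crystalline fundamental group of the smooth special fiber $\oln X$. This yields that each graded step of the lower central series quotient of $G^{\et}$ is a crystalline $G_K$-representation, so $G^{\et}$ is crystalline as a pro-unipotent Galois object.

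For the hard direction ($\Leftarrow$), suppose $G^{\et}$ is crystalline but we only have semistable reduction via a model $X$ with possibly singular special fiber $\oln X$. A semistable non-abelian comparison theorem identifies $D_{st}(G^{\et})$ with the log-crystalline unipotent fundamental group of $\oln X$, equipped with compatible Frobenius, Hodge filtration, and monodromy operator $N$ refining the classical $N_{st}$ on $H^1_{\et}(X_{\oln K},\Qp)$. The crystalline hypothesis forces $N=0$ on the entire tower of nilpotent quotients. One then appeals to the $p$-adic analog of Oda's theorem: the operator $N$ on the unipotent log fundamental group encodes the dual graph of $\oln X$ together with certain period data, and its vanishing on the step-$2$ quotient of the lower central series already forces the dual graph of $\oln X$ to be trivial, hence $\oln X$ to be smooth.

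The main obstacle is twofold. First, one must construct the semistable non-abelian comparison theorem and verify that it transports the Fontaine monodromy operator $N_{st}$ compatibly along the lower central series of the pro-unipotent fundamental group, with matching Hodge and Frobenius structures; this is essentially the pro-unipotent analog of Tsuji's theorem and is far more delicate than the abelian statement. Second, one must establish (or cite) the refined complex-analytic Oda-type statement in the required form: a semistable degeneration of curves of genus $\geq 2$ is smooth if and only if the monodromy on a sufficiently high nilpotent quotient of the unipotent de Rham fundamental group vanishes. The genus $\geq 2$ hypothesis enters essentially here, since for curves of genus $\leq 1$ the unipotent fundamental group is abelian and reduces to the $H^1$ criterion, whereas in the higher-genus case the unipotent fundamental group carries strictly more information than $H^1$, enough to detect good reduction of the curve itself rather than just of its Jacobian.
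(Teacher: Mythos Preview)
This theorem is not proved in the present paper. It is quoted in the introduction as an external result, with the citation \cite[theorem 1.6]{Andreatta:2015aa}, purely to motivate the analogous question for $K3$-surfaces. Consequently there is no ``paper's own proof'' against which your proposal can be compared.

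That said, your sketch is a reasonable high-level outline of the strategy actually carried out by Andreatta--Iovita--Kim in the cited reference: a non-abelian semistable comparison theorem identifying $D_{st}$ of the unipotent \'etale fundamental group with a log-crystalline/de Rham unipotent fundamental group of the special fibre, followed by an Oda-type statement detecting smoothness from vanishing of monodromy on a suitable nilpotent quotient. You are also right that the two principal technical inputs are (i) the construction of the non-abelian $p$-adic comparison isomorphism compatible with $(\phi,N,\Fil)$ along the lower central series, and (ii) the Oda-type input. However, your write-up is a strategic outline rather than a proof: none of the hard steps is actually carried out, and in particular the non-abelian comparison theorem is the substantial content of \cite{Andreatta:2015aa}, not something one can simply invoke. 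If the intent was to supply a proof for inclusion here, it does not: the paper deliberately leaves this as a citation.
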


 This raises the very interesting question: given a class of algebraic varieties, are there combinatorial (linear algebra type) objects attached to them which determine their geometry? If yes, what are they? In this paper we give an answer to this question for a family of $K3$-surfaces. 
 
 \begin{remark}
 	\label{rem:otherprofs}
 	 We proved the classification theorem~\eqref{thm:MainTheorem} and its corollary (the good reduction criterion)~\eqref{thm:MainTheoremCrystalline} in my PhD thesis (A Crystalline Criterion for Good Reduction on Semi-stable $K3$-Surfaces over a $p$-Adic Field,  Concordia University, Montreal, Canada. 2014). The good reduction criterion corollary~\eqref{thm:MainTheoremCrystalline} was also studied later using different methods and with some variants in the hypothesis by~\cite{MR3699071,MR3299851} and by Hernández-Mada in his Ph.D thesis (2015); and recently generalized by \cite{Chiarellotto:2016ca}.
 	The novelty of our approach to prove the $p$-adic good reduction criterion, is the fact that we  construct a one parameter logarithmic degeneration of $K3$-surfaces on p-adic settings that mimics the degeneration that occurs over the complex numbers (the Kulikov degeneration of K3 surfaces over the complex disc), and that gives a characterization; using logarithmic geometry, $p$-adic Hodge theory and  complex methods; of the type of the special fiber in terms of the Fontaine's monodromy operator $N_{st}$ on $D_{st}(H_{\text{\'et}}^{2}(X_{\overline{K}}, \mathbb{Q}_p))$. 
 \end{remark}
 
\section{Kulikov Degeneration's Theorem} 
\label{sec:kulikov_degeneration_s_theorem}

We will briefly describe  the Kulikov-Persson-Pinkham's classification theorem of the central fiber of a semistable family of complex $K3$-surfaces over the complex open disk. The main references are~\cite{Persson:1981wp,MR0466149,Nishiguchi:1983wa,Morrison:1984uh,Friedman:1986ve,Kulikov:1977aa}.

Denote by $\Delta:=\set{z\in \C : \abs z < \varepsilon}$ the open small disk and by $\Delta^*$ the punctured disk, that is $\Delta^*=\Delta\setminus\set 0$.

\begin{definition}\label{def:degeneration}
A degeneration of $K3$-surfaces is a flat proper holomorphic  map $\pi: \mathcal Y \to \Delta$ of relative dimension $2$ such that $\mathcal Y_t:=\pi^{-1}(t)$ is a smooth $K3$-surface for $t\ne0$. We call $\mathcal \mathcal Y_0:=\pi^{-1}(0)$ the degenerated fiber or central fiber. We assume that $\mathcal Y$ is K\"ahler.

If we have a fixed $K3$-surface $Y$, then a degeneration of $Y$ is a degeneration of $K3$-surfaces such that for some $t\ne 0$, $\mathcal Y_t=Y$.
\end{definition}

\begin{definition}\label{def:semistabledegeneration}
A degeneration $\pi:\mathcal Y \to \Delta$ is semistable if the central fiber $\mathcal \mathcal Y_0$ is a reduced divisor with normal crossings, that is the union $\mathcal \mathcal Y_0=\cup Y_i$ of irreducible components with each $Y_i$ smooth and the $Y_i$'s meeting transversally so that locally $\pi$ is defined by an equation of the form $0=x_0x_2\ldots x_k$ for some $k$.
\end{definition}

\begin{definition}\label{def:modification}
A degeneration $\pi':\mathcal Y'\to \Delta$ is called a {modification} of a degeneration $\pi:\mathcal Y\to \Delta$; if there exists a birational map $\psi:\mathcal Y\to \mathcal Y'$ such that the diagram commutes:
$$
\xymatrix{
\mathcal Y\ar@{.>}[rr]^{\psi}\ar[dr]_{\pi} & &  \mathcal Y' \ar[dl]^{\pi'} \\
& \Delta &
}
$$

and the restriction of $\phi$  to  $\pi^{-1}(\Delta^*)$ gives an isomorphism $\pi^{-1}(\Delta^*)\xto \phi \pi'^{-1}(\Delta^*)$ over $\Delta^*$.
\end{definition}


\begin{definition}\label{def:kulikovmodel}
A semistable degeneration of $K3$-surfaces, $\pi:\mathcal Y \to \Delta$,  with trivial canonical bundle, $K_Y\equiv 0$, is called a \emph{Kulikov model} or a \emph{good model}.
\end{definition}

We have the following theorem of \cite{Kulikov:1977aa} and~\cite{Persson:1981wp,MR0466149} (see also~\cite[theorem 0.1]{Friedman:1986ve}):

\begin{theorem}[Kulikov-Persson-Pinkham]\label{thm:Persson-Pinkham-Kulikov} \cite[theorem II]{Kulikov:1977aa} 
	
Let $\pi: \mathcal Y \to \Delta$ be a semistable degeneration of $K3$-surfaces such that all components of the special fiber are algebraic. Then there exists a modification $\pi':\mathcal Y\to \Delta$ of $ \pi:\mathcal Y \to \Delta$ which is a Kulikov model.
\end{theorem}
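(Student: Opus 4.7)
The plan is to produce the Kulikov modification iteratively, via a finite sequence of birational transformations $\mathcal Y \dashrightarrow \mathcal Y'$ over $\Delta$, each preserving semistability and each strictly reducing a nonnegative integer invariant that measures how far $K_{\mathcal Y}$ is from being trivial. The starting point is adjunction: since $\mathcal Y_t$ is a $K3$-surface for $t \neq 0$, the relative dualizing class $K_{\mathcal Y/\Delta}$ restricts to zero on a general fibre, and so
\[ K_{\mathcal Y} \sim \sum_i a_i Y_i, \qquad a_i \in \Z, \]
where $\mathcal Y_0 = \sum_i Y_i$ has all multiplicities equal to $1$ by semistability. Because $\mathcal Y_0$ is a fibre, and hence linearly trivial, $\sum_i Y_i|_{Y_j} = 0$, and adjunction on each component gives
\[ K_{Y_j} = (K_{\mathcal Y} + Y_j)|_{Y_j} = \sum_{i \neq j}(a_i - a_j - 1)\, D_{ij}, \qquad D_{ij} := Y_i \cap Y_j. \]
The goal is to reach $K_{\mathcal Y} \sim 0$, equivalently (after shifting by $\mathcal Y_0$) all $a_i = 0$.

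The modifications I would allow are the classical ones. Type (a) is a flop along a smooth rational curve $C \subset D_{ij}$ with self-intersection $-1$ on each side, which moves $C$ across the double locus and rearranges the $(a_i)$ in a book-keepable way. Type (b) is a blow-up along a triple point $Y_i \cap Y_j \cap Y_k$, used to create $(-1,-1)$-configurations where none exist so that a flop can subsequently be applied. Type (c) is a Mori-type contraction over $\Delta$ of a component $Y_i$ with $a_i > 0$ meeting the rest of the central fibre in an extremal configuration; after such a contraction the neighbouring components are rearranged and semistability is preserved. For each such move one checks directly from the adjunction identities that the central fibre remains reduced with strict normal crossings and that $K_{\mathcal Y}$ transforms predictably.

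Setting $I(\mathcal Y) := \sum_i |a_i|$ (or a lexicographically refined invariant also recording the number and type of double curves), I would show that whenever $K_{\mathcal Y} \not\sim 0$ the adjunction identities above force the existence of a component and a double curve to which one of (a)--(c) applies, with $I$ strictly decreasing. Since $I \in \Z_{\geq 0}$, the process terminates, and the final model $\pi': \mathcal Y' \to \Delta$ satisfies $K_{\mathcal Y'} \sim 0$, i.e.\ it is a Kulikov model. The hard part, and what constitutes the technical core of \cite{Kulikov:1977aa} and of the replacement argument of \cite{Persson:1981wp,MR0466149}, is verifying that a useful modification is always available when $K_{\mathcal Y} \not\sim 0$ and that executing it does not break semistability (no non-reduced components, no non-normal-crossing singularities appear). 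This proceeds by a careful case analysis on the Kodaira-type classification of the components $Y_i$ (minimal ruled, rational, elliptic ruled, $K3$, Enriques, \ldots); the hypothesis that every $Y_i$ is algebraic is what guarantees that the necessary Mori-type contractions over $\Delta$ actually exist, and the K\"ahler hypothesis on $\mathcal Y$ keeps intersection theory available after each step.
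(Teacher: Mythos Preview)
The paper does not prove this theorem at all: it is stated as a known result and attributed to \cite{Kulikov:1977aa} and \cite{Persson:1981wp,MR0466149} (with a pointer to \cite[theorem 0.1]{Friedman:1986ve}), with no argument given. So there is no ``paper's own proof'' to compare against.

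Your sketch is a fair outline of the strategy actually used in the original references: write $K_{\mathcal Y}\sim\sum_i a_i Y_i$, use adjunction to read off the discrepancies on each component, and then iterate elementary birational modifications over $\Delta$ (flops of $(-1,-1)$-curves in the double locus, blow-ups of triple points, contractions of extremal components) to drive a nonnegative integer invariant to zero. That said, as written it is only a plan, not a proof: the entire content of the theorem is the case analysis you defer at the end---showing that whenever $K_{\mathcal Y}\not\sim 0$ one of your moves (a)--(c) is actually available and preserves semistability. Your invariant $I(\mathcal Y)=\sum_i|a_i|$ is also too coarse by itself (a flop need not decrease it), which is why the real argument uses a more delicate ordering and why Persson--Pinkham had to repair Kulikov's original proof. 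If you intend to present this as more than a citation, you would need to carry out that analysis; otherwise, citing the references as the paper does is the appropriate move.
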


Given a Kulikov model, Kulikov-Persson-Pinkham~~\cite{Kulikov:1977aa,MR0466149,Persson:1981wp} give a description of the cohomology of its special fiber in terms of the monodromy operator acting on cohomology.

Let $\pi:\mathcal Y\to \Delta $  be a degeneration of $K3$-surfaces, and let $\pi^*:\mathcal Y^*\to \Delta^*$ be the restriction to the punctured disk. Fix a smooth fiber  $Y:=\mathcal Y_t$, which is a $K3$-surface.  Since $\pi^*$ is a fibration, the fundamental group of $ \Delta^*$ acts on the cohomology $H^2(Y,\Z)$.

\begin{definition}\label{def:Picard-LefschetzTransformation}
The map \(
T:H^2(Y,\Z)\To H^2(Y,\Z),
\) induced by the action of $\pi_1( \Delta^*)$ is called the \emph{Picard-Lefschetz transformation}.
\end{definition}

We have the following theorem of Landman~\cite{Landman:2010wp}:
\begin{theorem}\label{thm:MonodromyTheorem}\phantom{.}\hspace{0.1cm}
	
\begin{itemize}
    \item $T$ is quasi-unipotent, with index of unipotency at most $2$. In other words, there is some $k$ such that $$ (T^k-I)^{3}=0.$$
    \item If $\pi:\mathcal Y\to \Delta$ is semistable, then $T$ is unipotent, that is $k=1$.
\end{itemize}
\end{theorem}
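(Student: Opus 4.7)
The plan is to deduce the theorem from two independent ingredients: the general quasi-unipotence of monodromy for an integral variation of Hodge structure (VHS) over the punctured disk, and the sharp bound on the nilpotency order coming from the weight of the Hodge structure. Throughout I will use freely that $(T^k-I)^3=0$ is equivalent to $N^3=0$ for $N:=\log T^k$, via the truncated series $T^k-I=\sum_{j\geq 1}N^j/j!$ and $N=\sum_{j\geq 1}(-1)^{j-1}(T^k-I)^j/j$, both of which terminate once $N$ is known to be nilpotent.

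For the quasi-unipotence part, I would first note that the Picard-Lefschetz transformation $T$ preserves the integral lattice $H^2(Y,\Z)$, so its characteristic polynomial has integer coefficients; and by Hodge-theoretic norm estimates (Schmid's orbit theorems, or Griffiths-style bounded-curvature arguments for the period map) every eigenvalue of $T$ lies on the unit circle. A monic integer polynomial with all roots on the unit circle has roots that are roots of unity (Kronecker's theorem). Hence some power $T^k$ is unipotent and $N=\log T^k$ is nilpotent. To pin down the sharp $N^3=0$, I would appeal to Schmid's nilpotent orbit theorem applied to the polarized VHS of weight $n=2$ on $R^2\pi_*\C$ over $\Delta^*$. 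This produces a limit mixed Hodge structure on $H^2$ whose weight filtration $W_\bullet$ is the monodromy weight filtration of $N$ centered at $n=2$; explicitly, $W_\bullet$ is supported in degrees $0,1,2,3,4$ and $N^\ell$ induces isomorphisms $\mathrm{Gr}^W_{2+\ell}\xrightarrow{\sim}\mathrm{Gr}^W_{2-\ell}$ for every $\ell\geq 0$. Since $W_5=W_4$ and $W_{-1}=0$, this structure forces $N^3=0$; for a VHS of weight $n$ the same argument gives $N^{n+1}=0$.

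For the semistable statement $k=1$, the cleanest route is Steenbrink's explicit construction of the limiting mixed Hodge structure on a semistable degeneration as the abutment of a spectral sequence built from the Gysin and restriction maps between strata of the central fiber; the resulting monodromy $N$ is manifestly nilpotent on the nose, so $T=\exp(N)$ is honestly unipotent without any base change. Alternatively, one may read unipotence off from the local Picard-Lefschetz formula applied to each chart $0=x_0\cdots x_k$, where the vanishing-cycle contribution is a product of Dehn twists which is visibly unipotent. The step I expect to be the main obstacle is the sharp bound $N^3=0$, since this needs the full strength of Schmid's theorem and genuine Hodge-theoretic input; by contrast, the nilpotency of $N$ itself, and the semistable upgrade from quasi-unipotent to unipotent, follow from comparatively formal arguments (Kronecker plus local Picard-Lefschetz).
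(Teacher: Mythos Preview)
The paper does not actually prove this theorem: it is stated as a result of Landman, with a citation, and used as background input for the Kulikov--Persson--Pinkham classification. There is therefore no proof in the paper to compare against.

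That said, your outline is correct and follows the standard modern route. The quasi-unipotence step (integrality of $T$ on $H^2(Y,\Z)$, eigenvalues on the unit circle via Hodge-norm estimates, then Kronecker) is Borel's argument as made precise in Schmid's work; the sharp bound $N^{n+1}=0$ via the monodromy weight filtration of a weight-$n$ limiting mixed Hodge structure is exactly the right mechanism, and specializing to $n=2$ gives $(T^k-I)^3=0$. For the semistable refinement $k=1$, your Steenbrink argument is fine; an even more elementary way to phrase it is that in each local chart $x_0\cdots x_r=t$ the local monodromy is a product of commuting transvections, hence unipotent, and these glue. Historically Landman's original argument predates Schmid's orbit theorems and is more hands-on, but your sketch is the approach most current expositions take.
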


Therefore, for a Kulikov model of a $K3$-surface, we have that the Picard-Lefschetz transformation is unipotent. Moreover, we can define the logarithm of $T$ (in the semistable case) by:

\begin{definition}\label{def:MonodromyOperator}
The \emph{Monodromy operator} $N$ on $H^2(Y,\Z)$ is defined as:
\[
N:=\log T = (T-I)-\frac 12 (T-I)^2.
\]
$N$ is nilpotent, and the index of unipotency of $T$ coincides with the index of nilpotency of $N$; in particular, $T=I$ if and only if $N=0$.
\end{definition}

We finally present the  very important classification theorem of \cite{Kulikov:1977aa}, \cite{Persson:1981wp,MR0466149} of the central fiber of a good model:

\begin{theorem}[Kulikov-Persson-Pinkham]\label{thm:KulikovClassification}\cite[theorem II]{Kulikov:1977aa} \cite[sec 3.3]{MR0466149}
Let $\pi:\mathcal Y \to \Delta$ be a semistable degeneration of $K3$-surfaces with all components of the central fiber $\mathcal Y_0=\pi^{-1}(0) = \bigcup V_i$ algebraic.

 Let $N=\log T:H^2(\mathcal Y_t,\Z)\to H^2(\mathcal Y_t,\Z)$ be the monodromy operator. After birational modifications we may assume that $\pi:\mathcal Y\to \Delta$  is a Kulikov model. Then the central fiber $\mathcal Y_0$ is one of the following:
\begin{enumerate}
    \item[I.]  $\mathcal Y_0$ is a $K3$-surface and $N=0$.
    \item[II.] $\mathcal Y_0=V_0\cup V_1\cdots V_r$, where $V_0,V_r$ are smooth rational, and $V_1, \ldots, V_{r-1}$ are smooth elliptic ruled and $V_i\cap V_j\ne \emptyset$ if and only if $j=i\pm 1$. $V_i\cap V_j$ is then a smooth elliptic curve and a section of the ruling on $V_i$, if $V_i$ is elliptic ruled. $N\ne0$ but $N^2=0$.
    \item[III.] $\mathcal Y_0=\cup V_i$, with each $V_i$ smooth rational and all double curves are cycles of rational curves. The dual graph $\Gamma$ is a triangulation of $S^2$. In this case $N^2\ne 0$, but $N^3=0$.
\end{enumerate}
\end{theorem}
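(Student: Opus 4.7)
The plan is to combine two complementary inputs: a geometric analysis of the irreducible components $V_i$ of $\mathcal Y_0$ driven by the triviality $K_{\mathcal Y}\equiv 0$ of the Kulikov model, and a Hodge-theoretic analysis of the monodromy $N$ via the Clemens--Schmid exact sequence. Matching the outputs of the two analyses yields the three-fold dichotomy in the statement.

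For the first input, apply adjunction to each component of the semistable central fiber together with $K_{\mathcal Y}\equiv 0$ to obtain
\[
K_{V_i} \;=\; -D_i, \qquad D_i \;:=\; \sum_{j\ne i}(V_i\cap V_j).
\]
The normal crossings hypothesis forces $D_i$ to be either empty, a smooth curve, or a cycle of smooth rational curves. Applied componentwise, the Enriques--Kodaira classification, together with the classification of anticanonical pairs with cyclic boundary, forces each $V_i$ to fall into one of three types: a K3 surface, only possible if $\mathcal Y_0=V_i$ (Type I); a smooth rational or elliptically ruled surface with $D_i$ a smooth elliptic curve (Type II); or a smooth rational surface with $D_i$ a cycle of smooth rational curves (Type III). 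Requiring the $D_i$'s to glue consistently along double curves, and using the connectedness of $\mathcal Y_0$, pins down the combinatorial shape: in Type II the dual graph is a chain, and in Type III the dual complex is two-dimensional.

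For the second input, the Clemens--Schmid exact sequence identifies the weight filtration on the limit mixed Hodge structure $H^2_{\lim}$ with the monodromy weight filtration of $N$ on $H^2(\mathcal Y_t,\Q)$. Since $N$ is nilpotent of index at most $3$, the weights occurring lie in $\set{0,1,2,3,4}$, and the nilpotency degree of $N$ is detected by which extreme weights occur: $N=0$ iff $H^2_{\lim}$ is pure of weight $2$; $N\ne 0=N^2$ iff weights $1$ and $3$ occur but $0$ and $4$ do not; and $N^2\ne 0=N^3$ iff weights $0$ and $4$ both occur. The graded pieces $\mathrm{Gr}^W_k H^2_{\lim}$ are computed by the weight spectral sequence associated with the stratification of $\mathcal Y_0$ into components, double curves, and triple points; its $E_1$ page shows that the extreme weights $0$ and $4$ capture $H^0$ and $H^2$ of the dual complex $\Gamma$, the middle weight $2$ captures the $H^2$ of the components together with $H^0$ of the double curves, and weights $1$, $3$ capture $H^1$ of the double curves. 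Matching with the first step identifies each case with its weight pattern; in particular, weight $4$ being nonzero, combined with the invariance $\chi(\mathcal Y_0)=\chi(\text{K3})=24$, forces $\Gamma$ to triangulate $S^2$.

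The main obstacle I anticipate is the matching step, specifically ruling out hybrid configurations where some components look Type II while others look Type III, and showing that the dual graph satisfies \emph{exactly} the topological property predicted by the nilpotency degree of $N$. This rests on two rigidifying inputs: the Euler characteristic equality $\chi(\mathcal Y_0)=24$ coming from topological invariance of $\chi$ under semistable degeneration, and Friedman's $d$-semistability condition together with the triple-point formula, which pin down the number of triple points and the lengths of the double cycles so tightly that only the three listed configurations survive.
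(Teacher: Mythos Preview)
The paper does not supply its own proof of this theorem; it is quoted as a classical result of Kulikov and Persson--Pinkham, and the paper simply remarks that ``the proof of these results uses the Clemens--Schmid exact sequence,'' pointing to Morrison's exposition. Your outline---adjunction with $K_{\mathcal Y}\equiv 0$ to classify the components, followed by the Clemens--Schmid exact sequence and the weight spectral sequence to pin down the nilpotency degree of $N$---is exactly that standard argument, so your approach matches what the paper invokes.
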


The proof of these results uses the Clemens-Schmid exact sequence. An account of this sequence is the paper~\cite{Morrison:1984uh} (see also~\cite[theorem 0.2]{Friedman:1986ve}) in which as application we have the proof of the monodromy criterion in previous theorem~\eqref{thm:KulikovClassification}.


\section{Logarithmic Structures} \label{sec:logarithmic_structures}
In this section we give a fast review of the theory of logarithmic structures of Fontaine-Illusie-Kato~\cite{Kato:1989vk}. A complete treatment of the subject can be found at~\cite{Ogus:2006tu}.
\begin{definition}
\label{def:monoid}
A \emph{monoid} is a commutative semi-group with a unit. A morphism of monoids is required to preserve the unit element.  We denote by \textbf{Mon} the category of monoids.
\end{definition}
\begin{definition}
Let $X$ be a scheme. A \emph{pre-logarithmic structure} on $X$ is a sheaf of monoids $M_X$ (on the étale or Zariski site of $X$) together with a morphism of sheaves of monoids: $\alpha:  M_X\To \OO_X$, called the \emph{structure morphism}, where we consider $\OO_X$ a monoid with respect to the multiplication.

A pre-logarithmic structure is called a \emph{logarithmic structure} if $\alpha^{-1}(\OO_X^*) \simeq \OO_X^*$ via $\alpha$.

The pair $(X,M_X)$ is called a \emph{logarithmic scheme} or a \emph{logarithmic scheme}, and it will be denoted by $X^{\log}$ if the logarithmic structure is clear.
\end{definition}

We have a functor $i$ from the category of logarithmic structure of $X$ to the category of pre-logarithmic structure of $X$, by sending a logarithmic structure $M$ in $X$ to itself; considered as a pre-logarithmic structure $i(M)$. Vice-versa, given a pre-logarithmic structure, we can construct a logarithmic structure  $M^{a}$ out of it, in such a way that $(\ )^{a}$ is left adjoint of $i$, so $M^a$ is \emph{universal}~\cite[1.3]{Kato:1989vk}.
\begin{remark}
The category of schemes is a full subcategory of the category of logarithmic schemes. Indeed, given a scheme $X$ the trivial inclusion $\OO_X^*\To \OO_X$ gives the trivial logarithmic structure on $X$, which is, in fact, an initial object in he category of logarithmic structure over $X$.  Also we have the identity map $\OO_X\To \OO_X$ which gives a different logarithmic structure on $X$, which is in fact a final object.\end{remark}
To clarify  the action of this inclusion on morphisms we need the following definitions.
\begin{definition}
Let $f:X\to Y$ be a morphism of schemes. Given a logarithmic structure $M_Y$ on $Y$ we can define a logarithmic structure on $X$, called \emph{the inverse image} of $M_Y$, to be the logarithmic structure associated to the pre-logarithmic structure
$$f^{-1}(M_Y)\to f^{-1}(\OO_Y)\to \OO_X,$$
it is denoted by $f^*(M_Y)$.
\end{definition}

\begin{definition}
A morphism of logarithmic schemes $(X, M_X)\To (Y,M_Y)$  consists of a morphism of underlying schemes $f:X\to Y$ and a morphism $f^b: f^*M_Y\to M_X$ of logarithmic structure on $X$.
\end{definition}

One of the main examples of interest for us is the following:
\begin{example}
 Let $X$ be a regular scheme. Let $D$ be a divisor of $X$. We can define a logarithmic structure $M$ on $X$ associated to the divisor $D$ as
\[M(U):=\set{g\in \OO_X(U):g|_{U\setminus D}\in \OO_X^*(U\setminus D)}.
\]
\end{example}

Let $P$ be a monoid and $R$ a ring and let $R[P]$ be its monoid algebra. 
The natural inclusion $P\To R[P]$, induces a pre-logarithmic structure on $P\to \spec{R[P]}$.
The associated logarithmic structure is called the \emph{canonical logarithmic structure} on $\spec{R[P]}$. 
The logarithmic structure on $\spec{R[P]}$ is the inverse image of the logarithmic structure on $\spec{\Z[P]}$ under the natural map $\spec{R[P]}\To \spec{\Z[P]}$.
\begin{definition}\label{def:logChart}
Let $(X,M_X)$ be a logarithmic scheme  and $P$ be a monoid. Denote by $P_X$ the constant sheaf associated to $P$. A \emph{chart} for $M_X$ is a morphism $P_X\to M_X$ such that we have an isomorphism between the logarithmic structures \( P^a\to M_X \) where $P^a$ is the logarithmic structure associated to the pre-logarithmic structure given by the map $P_X\to M_X\to\mathcal O_X$. Equivalently a chart  of $M_X$ is a morphism $X\to \spec{\Z[P]}$ of logarithmic structures, such that its morphism of logarithmic structures, $P_X\to M_X$, is an isomorphism.
\end{definition}

\begin{definition}\label{def:chartmorphisms}
Let $f:X\To Y$ be a morphism of logarithmic schemes. Consider the constant sheaves $P_X$ and $Q_Y$ on $X$ and $Y$ associated to the monoids $P$ and $Q$ respectively.  A \emph{chart} for the morphism $f$ is the data $(P_X\to M_X, Q_Y\to M_Y, Q\to P)$ such that:
\begin{itemize}
    \item The maps $P_X\to M_X$ and $Q_Y\To M_Y$ are charts of $M_X$ and $M_Y$.
    \item We have a commutative diagram:
    $$\xymatrix{ Q_X \ar[r]\ar[d] & P_X \ar[d] \\
                f^*M_Y \ar[r] & M_X }$$
\end{itemize}
where the top arrow is induced by the map $Q\to P$.
\end{definition}

Remember that given a monoid $P$, we can associate to it an abelian group denoted by $P^{gp}$ (its Grothendieck group). 

We have a canonical map $P\to P^{pg}$. This group satisfies the universal property that any morphism of monoids from $P$ to an abelian group $G$, factors trough  $P^{gp}$ in a unique way.
\begin{definition}\label{def:integral-saturated}
A monoid $P$ is called \emph{integral} if the canonical map $P\to P^{gp}$ is injective. It is called \emph{saturated} if it is integral and for any $p\in P^{gp}$, if $np\in P$ for some positive integer $n$, then $p\in P$.
\end{definition}

\begin{definition}\label{def:finescheme}
A logarithmic scheme $(X, M_X)$ is said to be \emph{fine}, if (\'etale) locally there is a chart $P\to M_X$, with $P$ a finitely generated integral monoid.

The scheme $(X,M_X)$ is \emph{fine and saturated} (fs) if $P$ is also saturated.  Equivalently a logarithmic scheme is fs if for any geometric point $\tilde x\to X$ the monoid $M_{\tilde x, X}$ is finitely generated and saturated.

If moreover $P\simeq N^r$ for some $r$, then we say that the logarithmic structure is \emph{locally free}.
\end{definition}

\begin{definition}\label{def:strictMorphism}
A morphism of logarithmic schemes $f:(X,M_X)\to (Y,M_Y)$ is called \emph{strict}, if  the morphism on logarithmic structures $f^*M_Y\to M_X$ is an isomorphism.
\end{definition}

\begin{definition}\label{def:closedimmersion}
A morphism of logarithmic schemes $\imath:(X,M_X)\to (Y, M_Y)$ is called a \emph{closed immersion} (resp. an exact closed immersion) if the underlying morphism of schemes $X\to Y$ is a closed immersion and $i^* M_Y\to M_X$ is surjective (resp. an isomorphism).
\end{definition}
\begin{definition}\label{def:logsmooth}
A morphism $f:X\to Y$ of fine logarithmic schemes is \emph{logarithmic smooth} (respectively logarithmic \'etale) if  \'etale locally  (on $X$ and $Y$) $f$ admits a chart $$(P_X\to M_X, Q_Y\to M_Y, Q\to P),$$ such that:
\begin{itemize}
    \item The kernel and the torsion part of the cokernel (resp. the kernel and the cokernel) of $Q^{gp}\to P^{gp}$ are finite groups of order invertible on $X$.
    \item The induced morphism of logarithmic schemes $$(X, M_X)\To (Y, M_Y) \times_{\spec{\Z[Q]}} \spec{\Z[P]}$$ is  \'etale in the classical sense.
\end{itemize}
\end{definition}

In logarithmic geometry~\cite{Ogus:2006tu,Olsson:2003uq,Kato:1989vk} we also have the notion of sheaves of logarithmic differentials that, as in the classical case, give us information about the smoothness (logarithmic smoothness) as we can see in the following proposition: 

\begin{proposition}\label{prop:log-smooth differentials}
Let $X\xto f Y\xto g Z$ be a morphism of logarithmic schemes. Consider the sheaves of logarithmic differentials $\omega_{Y/Z}^1$, $\omega_{X/Y}^1$ and $\omega_{X/Z}^1$.
Then we have the following:
\begin{enumerate}
    \item \label{part1} The sequence $f^* \omega_{Y/Z}^1\To \omega_{X/Z}^1 \To \omega_{X/Y}^1 \To 0$ is exact.
    \item \label{part2} If $f$ is logarithmic smooth, then $\omega_{X/Y}^1$ is a locally free $\mathcal O_X$-module. Moreover the sequence 
    \[
        0 \To f^*\omega_{Y/Z}^1 \To \omega_{X/Y}^1\To \omega_{X/Z}^1 \To 0
    \] 
    is exact.
    \item If $g\circ f$ is logarithmic smooth and the sequence in part~\eqref{part2} is exact and splits locally, then $f$ is logarithmic smooth.
\end{enumerate}
\end{proposition}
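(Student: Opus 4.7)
The plan is to mimic the classical proof of the standard exact sequences for $\Omega^1$, throughout replacing ordinary derivations by log derivations, and to exploit the fact that $\omega^1_{X/Y}$ represents the functor sending an $\mathcal O_X$-module $E$ to the set of pairs $(D,\delta)$ with $D\colon \mathcal O_X \to E$ an ordinary $\mathcal O_Y$-derivation and $\delta \colon M_X \to E$ a monoid map compatible with $D$ and trivial on $f^{-1}M_Y$. I will write $\mathrm{Der}_{X/Y}(E)$ for this set.

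For part (1), I would argue directly on derivations: a log $Z$-derivation $(D,\delta)$ of $(X,M_X)$ into $E$ that is trivial on the images of $f^{-1}\mathcal O_Y$ and $f^{-1}M_Y$ is by definition a log $Y$-derivation. This yields, for each $E$, a left exact sequence
\[
 0 \To \mathrm{Der}_{X/Y}(E) \To \mathrm{Der}_{X/Z}(E) \To \Hom_{\mathcal O_X}(f^*\omega^1_{Y/Z},E),
\]
functorial in $E$. Applying the Yoneda lemma to the representability statements for each of the three functors above converts this into the desired right exactness of $f^*\omega^1_{Y/Z} \to \omega^1_{X/Z}\to \omega^1_{X/Y}\to 0$.

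For part (2), I would work étale-locally, using the log smooth chart $(P_X\to M_X,Q_Y\to M_Y,Q\to P)$ together with the classical-étale structure map $X \to Y\times_{\spec{\Z[Q]}}\spec{\Z[P]}$ provided by log smoothness of $f$. For the universal model $\spec{\Z[P]}\to \spec{\Z[Q]}$, a direct computation from the representability of $\omega^1$ in terms of monoid generators yields $\omega^1 \simeq \mathcal O \otimes_{\Z} (P^{gp}/Q^{gp})$; under the smoothness hypothesis the torsion of $P^{gp}/Q^{gp}$ has order invertible on $X$, so this module is locally free. Base change along the classical-étale map preserves local freeness and does not affect $\omega^1_{X/Y}$. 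Left exactness of the short sequence then reduces to exactness of
\[
 0 \To \mathcal O_X\otimes Q^{gp} \To \mathcal O_X\otimes P^{gp} \To \mathcal O_X\otimes (P^{gp}/Q^{gp}) \To 0,
\]
which is visible from the hypothesis on the kernel of $Q^{gp}\to P^{gp}$.

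For part (3), the hypotheses furnish a local splitting $\omega^1_{X/Z}\simeq f^*\omega^1_{Y/Z}\oplus \omega^1_{X/Y}$, and in particular a local retraction $r\colon \omega^1_{X/Z}\to f^*\omega^1_{Y/Z}$. I would then verify log smoothness of $f$ via the infinitesimal lifting criterion: given a square-zero exact thickening $T_0\hookrightarrow T$ of fine log schemes over $Y$ and a $Y$-morphism $T_0\to X$, log smoothness of $g\circ f$ produces some $Z$-lift $T\to X$; its failure to be a $Y$-morphism is classified by an element of $\Hom_{\mathcal O_{T_0}}(f^*\omega^1_{Y/Z},\mathcal I)$ where $\mathcal I$ is the ideal of $T_0$ in $T$, and correcting the lift by the image of this element under $r$ produces the desired $Y$-lift. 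The main technical obstacle will be the chart-level computation in part (2)---verifying $\omega^1_{\spec{\Z[P]}/\spec{\Z[Q]}}\simeq \mathcal O\otimes (P^{gp}/Q^{gp})$ and showing that classical étale base change interacts cleanly with log differentials---after which parts (1) and (3) follow formally from Yoneda and deformation theory respectively.
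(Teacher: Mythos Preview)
The paper does not actually prove this proposition: its entire proof is a citation to \cite[IV \S3.2]{Ogus:2006tu} and \cite[\S3]{Kato:1989vk}. Your sketch is therefore strictly more than the paper provides, and it follows essentially the standard argument found in those references --- Yoneda on log derivations for part~(1), the chart computation $\omega^1_{\spec\Z[P]/\spec\Z[Q]}\simeq \mathcal O\otimes(P^{gp}/Q^{gp})$ for local freeness in part~(2), and the infinitesimal lifting criterion for part~(3). So in substance there is nothing to compare: you are reproducing the content of the cited sources, which is what the paper defers to.

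Two minor points. First, the displayed sequence in part~(2) of the statement has its middle terms transposed; the intended sequence is $0\to f^*\omega^1_{Y/Z}\to \omega^1_{X/Z}\to \omega^1_{X/Y}\to 0$, and your argument implicitly works with this corrected version. Second, your reduction of left exactness to the sequence $0\to \mathcal O_X\otimes Q^{gp}\to \mathcal O_X\otimes P^{gp}\to \mathcal O_X\otimes(P^{gp}/Q^{gp})\to 0$ is not quite right as written: the chart you have is for $f$, not for $g$, so $f^*\omega^1_{Y/Z}$ is not $\mathcal O_X\otimes Q^{gp}$ in general. The cleaner route (as in Kato) is to reduce via the classical-\'etale factorisation $X\to Y\times_{\spec\Z[Q]}\spec\Z[P]\to Y$ to the case of the second map, where both sides can be computed directly relative to~$Z$; alternatively, once $\omega^1_{X/Y}$ is known to be locally free one gets a local splitting and checks injectivity by pairing against suitable log derivations. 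This is a small repair rather than a gap in strategy.
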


\begin{proof}
\cite[IV sec  3.2]{Ogus:2006tu} or \cite[sec 3]{Kato:1989vk}.
\end{proof}

\section{Simple Normal Crossing Log $K3$-Surfaces} 
In this section we are mainly following the work of \cite{Nakkajima:2000tl}.
We are assuming  that all schemes are noetherian and that all morphisms are of finite type.
\begin{definition}
\label{Definition: NCL}
Let $k$ be a field. A \emph{normal crossing variety} $Y/k$ over $k$ is a geometrically connected scheme  $Y$ over $k$, whose irreducible components are geometrically irreducible  and of the same dimension $d$, and such that $Y$ is isomorphic to $\spec{k[x_0, \ldots , x_d ] / (x_0\cdots x_r)}$ étale locally over $Y$, where $0\leq r\leq d$ is a natural number that depends on étale neighborhoods.
\end{definition}
We denote by $Y_{\sing}$ the \emph{singular locus}  of $Y$.
So $Y_{\sing} := D_1\cup D_2\cup\cdots \cup D_m$
is a disjoint union of the $m$ connected components of $Y_{\sing}$. We assume that each $D_i$ is geometrically connected.

\begin{definition}
\label{def:d-semistable}
A scheme $Z$ over $k$ is $d$-semistable if there is an isomorphism $$\Ext_{\mathcal{O_Z}}^1 (\Omega_{Z/k}, \mathcal O_Z)\simeq \mathcal O_{Z_{\sing}}.$$
\end{definition}
\begin{definition}
By a \emph{logarithmic point} we mean the  scheme $\spec{k}$ with the logarithmic structure induced by the morphism:
\begin{equation}
\label{deflogpoint}
\N^m\To k; \quad e_i\mapsto 0;
\end{equation}
where $e_i$ stands for the canonical generator of $\N^m$. Here $m$ is the number of geometrically connected components of $Y_{\sing}$.
\end{definition}

Note that for every $1\leq i\leq m$ we have a logarithmic structure on $$\spec{k[x_0,\ldots, x_d]/ (x_0\cdots x_r)},$$ which is the one associated to the pre-logarithmic structure given by
the map $$\N^{i-1}\oplus \N^{r+1}\oplus \N^{m-i} \To \spec{k[x_0,\ldots, x_d]/ (x_0\cdots x_r)}$$ such that for the basic elements $e_j\in \N^{m+r}$:
\begin{equation}
e_j\mapsto
\begin{cases}
 0 & e_j\in \N^{i-1}\\
 x_j & e_j \in \N^{r+1}\\
 0 & e_j\in \N^{m-i}.
\end{cases}
\end{equation}
Note that this logarithmic structure commutes with the logarithmic structure over $\spec{k}^{\log}$ since we have a commutative diagram:
\[\xymatrix{
\N^{i-1}\oplus \N\oplus \N^{m-i} \ar[r]\ar[d] & k\ar[d] \\
\N^{i-1}\oplus \N^{r+1}\oplus \N^{m-i}\ar[r] & \spec{k[x_0,\ldots, x_d]/ (x_0\cdots x_r)}
}\]
where the upper horizontal morphism sends $e_j\mapsto 0$ for $e_j\in \N^{m}$ and the left vertical map is $id\oplus\text{diagonal}\oplus id$.
Let $Y$ be a proper $d$-semistable normal crossing variety over $k$. We endow $Y$ with the logarithmic structure given by:
\begin{enumerate}\label{logstructurebarX}
\item \'Etale locally on the neighborhood of a smooth point of $Y$, the logarithmic structure is given by the pull back of the logarithmic structure of $\spec{k}^{\log}$.
\item \'Etale locally  on the neighborhood of a point of $D_i$, the logarithmic structure is the pull back of the logarithmic structure of $\spec{k[x_0,\ldots, x_d]/ (x_0\cdots x_r)}$ described above.
\end{enumerate}
\begin{definition}
 We denote $Y^{log}/ \spec{k}^{log}$ the logarithmic scheme described above and we call it a \emph{normal crossing logarithmic variety} (NCL).

 We say that the NCL variety $ Y^{log}/ \spec{k}^{log}$ is \emph{simple} if the underlying scheme $Y$ is a simple normal crossing variety, where simple means that all its irreducible components are smooth and geometrically irreducible (SNCL).
\end{definition}

Now we follow \cite{Nakkajima:2000tl} and  \cite[ch 8]{Kato:1996wo}

Let $R$ be a fixed complete noetherian local ring with maximal ideal $\fk m$ and residue field $k$. We are mainly interested in the case $R= W(k)$.
 Let $Q$ be a fine and saturated (fs) non torsion monoid.  Let $R[[Q]]$ be the completion of the monoid ring $R[Q]$ with respect to the maximal ideal $\fk m + R[Q\setminus \set{1}]$. If the monoid is $\N$, then $R[[Q]]$ is isomorphic to $R[[t]]$ as a local $R$-algebra.

 Let $C_{R[[Q]]}$ be the category Artinian local $R[[Q]]$-algebras with residue field $k$, and $\wh{C}_{R[[Q]]}$ be the category of pro-objets of $C_{R[[Q]]}$.

 \begin{definition}
 For an object $A$ of $C_{R[[Q]]}$, we endow $\spec{A}$ with a logarithmic structure whose chart is $Q\to A$.  We denote this logarithmic scheme by $\spec{A}^{log}$. This data is equivalent to the following: $A$ is a $R$-algebra and there is a global chart $$\spec{A}^{log}\To (\spec{\Z[[Q]]},Q).$$
 \end{definition}
 Let $\beta:\spec{k}^{log} \to (\spec{\Z[[Q]]},Q)$ be a morphism of logarithmic scheme induced by a morphism $$Q\setminus \set 0 \to k; \quad q\mapsto 0.$$

Let $Y^{log}$ be a (fs) logarithmic scheme that is logarithmic smooth and integral over $\spec{k}^{log}$.

 \begin{definition}
 An fs logarithmic scheme $Y^{log}_A$ over $\spec{A}^{log}$ is called a a charted deformation of $Y^{log}$ over $\spec{K}^{log}$, if $Y^{log}_A$ is a logarithmic smooth scheme over $\spec{A}^{log}$ and \[Y^{log} \simeq Y^{log}_A \times_{\spec{A}^{log}} \spec{k}^{log}\] in the category of the fs logarithmic schemes.
 \end{definition}

 We have that $Y^{log}_A$ is automatically integral over $\spec{A}^{log}$.

 The charted deformations of $Y^{log}/\spec{A}^{log}$ define a functor
 \[D_{(Y^{log}, \beta)} \To (\text{\bf{Sets}}),\] Where $(\text{\bf{Sets}})$ is the category of sets.  Then we have the following:

 \begin{proposition}[F. Kato]\cite[theorem~8.7]{Kato:1996wo}.
 \label{prop:2.1}
 If $Y$ is proper, then the functor $D_{(Y^{log}, \beta)} $ has a hull.
 \end{proposition}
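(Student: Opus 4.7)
The plan is to apply a logarithmic version of Schlessinger's criterion to the functor $D_{(Y^{\log},\beta)}$. Recall that for a covariant functor $F:C_{R[[Q]]}\to \mathbf{Sets}$ with $F(k)$ a singleton, the existence of a hull is guaranteed once one verifies (H1), (H2) and (H3): (H1) surjectivity of the natural map $F(A'\times_A A'')\to F(A')\times_{F(A)} F(A'')$ for $A''\to A$ a small surjection, (H2) bijectivity in the special case $A=k$, $A''=k[\epsilon]$, and (H3) finite-dimensionality of the tangent space $t_F := F(k[\epsilon])$ over $k$. So the task reduces to constructing a reasonable infinitesimal and obstruction theory for charted logarithmic deformations and then feeding properness of $Y$ into it.

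For the tangent-obstruction theory, I would identify the tangent space with the logarithmic first cohomology
\[
t_{D_{(Y^{\log},\beta)}} \;\simeq\; H^1\bigl(Y, \mcal{T}_{Y^{\log}/k^{\log}}\bigr),
\]
where $\mcal{T}_{Y^{\log}/k^{\log}} := \mcal{H}om_{\OO_Y}(\omega^1_{Y^{\log}/k^{\log}},\OO_Y)$ is the logarithmic tangent sheaf. By Proposition~\ref{prop:log-smooth differentials}, logarithmic smoothness of $Y^{\log}/\Spec(k)^{\log}$ ensures that $\omega^1_{Y^{\log}/k^{\log}}$, and hence $\mcal{T}_{Y^{\log}/k^{\log}}$, is a locally free $\OO_Y$-module of finite rank. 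Since $Y$ is proper over $k$, the cohomology $H^i(Y,\mcal{T}_{Y^{\log}/k^{\log}})$ is finite-dimensional for all $i$, which gives (H3). The obstruction to lifting a charted deformation along a small surjection $A''\twoheadrightarrow A$ with square-zero kernel $I$ should live in $H^2(Y,\mcal{T}_{Y^{\log}/k^{\log}})\otimes_k I$, computed by the usual Čech cocycle argument: local logarithmic smooth liftings exist by the logarithmic smoothness criterion (via logarithmic charts as in Definition~\ref{def:logsmooth}), and the differences between local liftings are derivations, producing a class in $H^2$.

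For (H1) and (H2), I would argue directly: given logarithmic smooth charted deformations $Y^{\log}_{A'}$ over $\Spec(A')^{\log}$ and $Y^{\log}_{A''}$ over $\Spec(A'')^{\log}$ agreeing over $\Spec(A)^{\log}$, glue them in the fs-logarithmic category to obtain a logarithmic smooth charted deformation over $\Spec(A'\times_A A'')^{\log}$. The chart $Q\to A'\times_A A''$ (compatible through both projections) equips the fiber product with its natural logarithmic structure, and the gluing on the underlying schemes is standard since $Y$ is separated and the morphisms involved are integral. The key point is that logarithmic smoothness, being local and stable under base change, is preserved by the glued deformation. Bijectivity in case (H2) follows because the two deformations then share the same trivial restriction to the point $\Spec(k)^{\log}$ and the gluing is uniquely determined.

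The main obstacle I anticipate is the subtle interaction between fs-ness and the pushout/gluing step in verifying (H1). In the \emph{fine} log category, fiber products are straightforward, but passing to \emph{fine and saturated} sometimes requires a saturation step that can a priori alter the underlying scheme. One must therefore check that the saturation of the naturally glued logarithmic structure coincides with the glued structure itself, using that the local charts are already saturated (since $Q$ is fs and each $Y^{\log}_{A_i}$ admits fs charts étale-locally) and that the maps $A'\twoheadrightarrow A$, $A''\twoheadrightarrow A$ induce identity maps on the monoid charts. Once this verification is in place, Schlessinger's machinery produces the hull in $\wh{C}_{R[[Q]]}$ pro-representing a formally smooth cover of $D_{(Y^{\log},\beta)}$.
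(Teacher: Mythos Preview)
The paper does not supply its own proof of this proposition; it merely records the statement and cites \cite[theorem~8.7]{Kato:1996wo}. Your outline via Schlessinger's criterion, with tangent space $H^1(Y,\mcal T_{Y^{\log}/k^{\log}})$ and obstructions in $H^2$, is precisely the route taken in Kato's paper, and the argument you sketch is correct in substance. Since there is nothing in the present paper to compare against beyond the citation, your proposal is in agreement with the intended (referenced) proof.
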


\begin{remark}
Returning to our situation of interest when we have a semistable model of a $K3$ surface $X_K$ over a local field $K$ (with residue field $k$), that is a diagram:
 \[
 \xymatrix{
 X_K\ar[r]\ar[d]  & X\ar[d] \\
 \spec{K\ar[r]} & \spec{\mathcal O_K}
 }\]
 with special fiber ${\oln X} = X\otimes k$. We set $Y={\oln X}$. Since $Y$ has a smoothing, that is, $Y$ lifts to a smooth $K3$-surface, then it is  $d$-semistable~\cite{Friedman:1983er} and~\cite{Olsson:2004tk}  then we can endow it with  the logarithmic structure studied in this section.  We can take $R= W:=W(k)$ as the ring of Witt vectors with coefficients in $k$.  Then proposition \ref{prop:2.1} is telling us that the deformation functor of the special fiber has a hull.
	
\end{remark}

  \begin{definition}
  Let $X^{log}/k^{log}$ be a NCL variety of pure dimension $2$. We say that $X^{log}/k^{log}$ is a \emph{normal crossing logarithmic $K3$-surface} if the underlying scheme $X$ is a proper scheme over $\spec{k}$ and
  \begin{enumerate}
  \item $H^1(X,\mathcal O_X) = 0$
  \item $\omega^2_{X/k}\simeq \mathcal O_X$.
  \end{enumerate}
  \end{definition}

  \begin{definition}\label{def:combinatorial} \cite[definition 3.2]{Nakkajima:2000tl}
  Let $X$ be a proper surface over a field $k$. Let $\oln k$ be an algebraic closure of $k$. $X$ is a combinatorial $K3$ surface if it satisfies one of the following conditions:
  \begin{enumerate}
  \item[I.] $X$ is a smooth $K3$ surface over $k$.
  \item[II.] $X\otimes_k \oln k = X_1\cup X_2\cup \cdots \cup X_N$ is a chain of smooth surfaces with $X_1$ and $X_N$ rational and the other elliptic ruled and two double curves on each of them are rulings. The dual graph of $X\otimes_K \oln k$ is a segment with end points $X_1$ and $X_2$.
 \item[III.] $X\otimes_K \oln k= X_1\cup X_2\cup \cdots \cup X_N$ is a chain of smooth surfaces and every $X_i$ is rational, and the double curves on $X_i$ are rational and form a cycle on $X_i$.
  \end{enumerate}
  and  $X$ has a logarithmic structure whose charts are given by its local normal crossing components   and $\omega^2_{X/k}\simeq \mathcal O_X$. 
  
 We say that $X$ is combinatorial of \emph{Type I}, \emph{Type II} or\emph{ Type II}  according to this definition. 
  \end{definition}
  
  Under this conditions~\cite[section 3]{Nakkajima:2000tl} proves that $H^1(X,\mathcal O_X)=0$. He also proves the following theorem~\eqref{thm:Nakka_type} and proposition~\eqref{prop:SNCLCombinatorial} that tell us that SNCL $K3$-surfaces are indeed a reasonable generalization of $K3$-surfaces and also gives us an equivalence between SNCL $K3$-surfaces and combinatorial $K3$-surface after extending scalars to the algebraic closure of $k$:

  \begin{theorem}\cite[theorem~3.3]{Nakkajima:2000tl}\label{thm:Nakka_type}
  Let $X$ be a combinatorial Type II or Type III $K3$ surface over a field $k$. Then $\Gamma(X,\omega^1_{X/k}) = 0$.
  \end{theorem}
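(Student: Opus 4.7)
The plan is to reduce the computation of $\Gamma(X, \omega^1_{X/k})$ to cohomology of log $1$-forms on the smooth components of $X$, and then to exploit the anti-canonical structure forced by $\omega^2_{X/k} \simeq \OO_X$. First I would set up the Mayer--Vietoris-type residue sequence attached to the normalization $\nu : \wt X = \bigsqcup_i V_i \to X$: writing $D_i \subset V_i$ for the preimage of the double locus and $C$ for the double curves of $X$, one has
\[
0 \to \omega^1_{X/k} \to \nu_{*}\Omega^1_{\wt X/k}(\log D) \To \bigoplus_C \OO_C \to 0,
\]
whose last map is the difference of Poincar\'e residues from the two branches. Passing to global sections then identifies $\Gamma(X, \omega^1_{X/k})$ with the kernel of
\[
\bigoplus_i H^0\bigl(V_i, \Omega^1_{V_i}(\log D_i)\bigr) \To \bigoplus_C H^0(C, \OO_C).
\]

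Next I would analyse each component through the standard residue sequence
\[
0 \to \Omega^1_{V_i} \to \Omega^1_{V_i}(\log D_i) \xto{\mathrm{Res}} \bigoplus_j \OO_{D_{i,j}} \to 0,
\]
whose long exact cohomology sequence presents $H^0(\Omega^1_{V_i}(\log D_i))$ as an extension of $H^0(\Omega^1_{V_i})$ by the kernel of the first Chern class map $\bigoplus_j k \to H^1(V_i, \Omega^1_{V_i})$, $e_j \mapsto c_1(D_{i,j})$. The essential additional input is adjunction: the assumption $\omega^2_{X/k} \simeq \OO_X$ restricts on each $V_i$ to $\omega^2_{V_i}(\log D_i) \simeq \OO_{V_i}$, so each pair $(V_i, D_i)$ is anti-canonical with $K_{V_i} + D_i \equiv 0$.

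For Type~III every $V_i$ is a smooth rational surface, hence $H^0(\Omega^1_{V_i}) = 0$, and the problem reduces to showing that residue tuples $(a_{i,j}) \in \bigoplus_{i,j} k$ lying in the kernel of each Chern class map and satisfying the matching condition along the double curves must vanish. I would argue this using that the components of $D_i$ have intersection matrix the negative of the Cartan matrix of affine type $\wt A_{n_i-1}$, which is negative semidefinite of corank one, and that the single linear relation among their classes in $\NS(V_i) \otimes k$ is killed by compatibility with the neighbouring $V_{i\pm 1}$ via the anti-canonical relation. For Type~II the end components $V_1, V_N$ are rational and are treated identically; the intermediate elliptic ruled $V_i$ satisfy $H^0(\Omega^1_{V_i}) = k$, generated by the pull-back from the base elliptic curve via the ruling, while $D_i$ consists of two sections of the ruling, so the global residue matching across $V_i \cap V_{i\pm 1}$ propagates any such pulled-back piece inward until it reaches the end rational surfaces where it must vanish.

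The main obstacle will be this last step in Type~II: controlling the interaction between the pull-back piece of $H^0(\Omega^1_{V_i})$ on an elliptic ruled component and the residue matching along its two sections. One has to propagate vanishing from the two rational ends inward along the whole chain, using the anti-canonical relation together with the fact that the double locus forms a \emph{simple} chain, with exactly two sections on each intermediate component, to gain enough rigidity to kill the pulled-back differential on every base elliptic curve.
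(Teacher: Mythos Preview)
The paper does not actually prove this statement: Theorem~\ref{thm:Nakka_type} is quoted verbatim from \cite[Theorem~3.3]{Nakkajima:2000tl} with no argument supplied, and the paper only uses it as a black box. There is therefore no ``paper's own proof'' to compare against; your proposal is an independent attempt at a result the paper merely cites.

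As for the sketch itself, the overall architecture---pushing $\omega^1_{X/k}$ to the normalization via a residue/Mayer--Vietoris sequence, then reducing to a linear-algebra problem in $\NS(V_i)\otimes k$ via the residue sequence on each component---is the standard approach and is essentially what Nakkajima does. Two points deserve care. First, your exact sequence $0 \to \omega^1_{X/k} \to \nu_{*}\Omega^1_{\wt X}(\log D) \to \bigoplus_C \OO_C \to 0$ is not quite the right shape for the \emph{logarithmic} sheaf $\omega^1_{X/k}$ over the log point; one must track the extra $\OO_X\,d\log t$ summand coming from the base log structure, and Nakkajima's precise form of the sequence (his Lemma~3.2) differs from what you wrote. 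Second, in Type~III the double curves on each $V_i$ form a cycle of rational curves whose classes are \emph{not} in general an $\widetilde A_{n_i-1}$ configuration in $\NS(V_i)$ (they are an anticanonical cycle on a rational surface, which can be far more complicated), so the ``negative Cartan matrix'' description is not correct as stated; the actual argument uses the anticanonical relation $\sum_j [D_{i,j}] = -K_{V_i}$ together with the global combinatorics of the dual complex (a triangulated $S^2$) to force the residue tuple to vanish. Your Type~II endgame is, as you say, the delicate part, and Nakkajima handles it by an explicit computation on elliptic ruled surfaces rather than by a pure propagation argument.
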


  \begin{proposition}\cite[proposition~3.4]{Nakkajima:2000tl}
	  \label{prop:SNCLCombinatorial}
  Let $X^{log}/\spec{k}^{log}$ be  SNCL $K3$ surface. Then $X\otimes_k \oln k$ is a combinatorial $K3$ surface.
  \end{proposition}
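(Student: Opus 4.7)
The plan is to reduce to the case $k=\oln k$ and then exploit log adjunction on the irreducible components together with the cohomological constraints coming from $H^1(X,\mathcal O_X)=0$ and $\omega^2_{X/k}\simeq\mathcal O_X$. The base change along $\oln k/k$ is faithfully flat and preserves the SNCL structure, the smoothness and geometric irreducibility of components (built into the definition of SNCL), the triviality of the dualizing sheaf, and the vanishing of $H^1$; so I may assume $k$ is algebraically closed and write $X=X_1\cup\cdots\cup X_N$ with each $X_i$ smooth. Set $D_i:=\sum_{j\ne i}X_i\cap X_j$ for the reduced double locus on $X_i$.

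The first step is log adjunction: the Poincaré residue attached to the SNCL structure gives $\omega^2_{X^{\log}/k^{\log}}|_{X_i}\simeq\omega_{X_i}(D_i)$, and combined with $\omega^2_{X^{\log}/k^{\log}}\simeq\mathcal O_X$ this yields $K_{X_i}+D_i\sim 0$ on each component. If $N=1$ then $D_i=\emptyset$, so $K_X\sim 0$, and together with smoothness and $H^1(X,\mathcal O_X)=0$ this forces $X$ to be a smooth $K3$ surface (Type I). From now on assume $N\geq 2$. Then $-K_{X_i}\sim D_i$ is effective and nonzero, which by the Enriques--Kodaira classification rules out $X_i$ being of general type, K3, Enriques, abelian, or bielliptic; so each $X_i$ is rational or birationally ruled. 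Applying adjunction to each double curve $C=X_i\cap X_j$ gives $2g(C)-2=(K_{X_i}+C)\cdot C=-(D_i-C)\cdot C$, and combining this with the analogous relation on $X_j$ together with the triple-point correction coming from $d$-semistability forces $g(C)\in\{0,1\}$.

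The core of the argument is then combinatorial: analyze the dual complex $\Gamma$ of $X$. The Mayer--Vietoris resolution
\[
0\to\mathcal O_X\to\bigoplus_i\mathcal O_{X_i}\to\bigoplus_{i<j}\mathcal O_{X_i\cap X_j}\to\bigoplus_{i<j<l}\mathcal O_{X_i\cap X_j\cap X_l}\to 0
\]
yields a spectral sequence linking $H^1(X,\mathcal O_X)$ to $H^1(\Gamma,k)$, $\bigoplus H^1(X_i,\mathcal O_{X_i})$, and the cohomology of double curves. Imposing $H^1(X,\mathcal O_X)=0$ together with the adjunction constraints rules out branching in $\Gamma$: when some double curve has $g=1$, a single elliptic ruled $X_i$ appears in the middle of a chain terminating in two rational components, yielding Type II, with the double curves arising as sections of the rulings; when all components are rational, $\Gamma$ is forced to be a cycle (equivalently a triangulation of a closed surface, but the cohomological constraints here cut this down to a cyclic chain with rational double curves forming cycles on each $X_i$), yielding Type III.

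The main obstacle I expect is the last combinatorial step: excluding branchings in $\Gamma$ and showing that on Type III components the double curves organize into cycles requires a careful exploitation of the Mayer--Vietoris spectral sequence together with the triple-point residue correction and the $d$-semistability identification $\Ext^1_{\mathcal O_X}(\Omega_{X/k},\mathcal O_X)\simeq\mathcal O_{X_{\sing}}$. I would close this by checking that any hypothetical branching vertex or non-cyclic double-curve configuration would produce either a nonzero class in $H^1(X,\mathcal O_X)$ or a global section of $\omega^1_{X/k}$ obstructing $\omega^2_{X/k}\simeq\mathcal O_X$.
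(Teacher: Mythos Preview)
The paper does not supply its own proof of this proposition; it simply quotes the statement from Nakkajima. Your overall strategy---log adjunction to obtain $K_{X_i}+D_i\sim 0$, the Bombieri--Mumford classification to force each $X_i$ to be rational or (elliptic) ruled, and a Mayer--Vietoris analysis of the dual complex constrained by $H^1(X,\mathcal O_X)=0$---is the standard one and is in substance what Nakkajima does.

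There is, however, a genuine error in your Type III discussion. You write that ``$\Gamma$ is forced to be a cycle (equivalently a triangulation of a closed surface, but the cohomological constraints here cut this down to a cyclic chain \ldots)''. This conflates two different objects. What forms a cycle in Type III is the double-curve divisor $D_i$ on each component $X_i$: the anticanonical divisor $-K_{X_i}=D_i$ is a cycle of smooth rational curves. The dual complex $\Gamma$ itself is \emph{not} a cycle or a chain. In Type III there are triple points, so $\Gamma$ is a genuine two-dimensional simplicial complex, and the Mayer--Vietoris argument shows it is a closed surface with $H^1(\Gamma,k)=0$ and $H^2(\Gamma,k)\cong k$, hence a triangulation of $S^2$ (compare the Kulikov--Persson--Pinkham statement quoted earlier in the paper). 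A one-dimensional cycle graph is not the same as, nor ``equivalent to'', a triangulation of a closed surface, and the cohomological constraints do not reduce the latter to the former; if you proceed trying to prove that $\Gamma$ is a cyclic chain you will be proving the wrong statement. You should also be aware that excluding $\mathbb{RP}^2$ (i.e.\ establishing orientability of $\Gamma$) in positive characteristic uses the triviality of $\omega^2_{X/k}$ to produce a coherent orientation on the dual complex; this is one of the points where Nakkajima's treatment has to go beyond the classical complex argument.
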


 \begin{definition}
 We say that a SNCL or a combinatorial $K3$ surface is of type (I, II or III) if $X$ is of the respective type.
 \end{definition}

 \begin{theorem} \label{thm:p-adic famillyNakkajima}
 Let $k$ be an algebraically closed field of characteristic $p>0$. Let $X^{log}$ be a {projective} SNCL  $K3$ surface over $\spec{k}^{log}$. Then there exists a logarithmic smooth family $\fk X^{log}$ over $\spec{W[[u_1,\ldots, u_m]]}^{log}$ which is a charter deformation of $X^{log}$ (Automatically $\omega^2_{\fk X/W[[u_1,\ldots, u_m]]}$ is trivial). Where $m$ is the number of geometrically connected components of $X_{sing}$.  Moreover, the deformation functor  has the information of the deformations of the logarithmic structure associated to the irreducible components $X_1,\ldots, X_N$ of $\oln X$, in such a way that there exist closed subschemes $\fk X_1,\ldots,\fk X_N$, deformations of $X_1,\ldots X_N$ respectively,  and the logarithmic structure on $\fk X$ is the one associated with $\fk X_1,\ldots,\fk X_N$ as on page \pageref{logstructurebarX}.
 \end{theorem}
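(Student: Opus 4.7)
The plan is to construct the family as a formal charted deformation over $W[[Q]]$ and then algebraize. First, I would apply Proposition~\ref{prop:2.1} with $R=W=W(k)$ and $Q=\N^m$, the chart being $Q\setminus\{0\}\to k$, $e_i\mapsto 0$; here the rank $m$ matches the number of geometrically connected components of $X_{\sing}$, so each generator of $\N^m$ corresponds to one double curve $D_1,\ldots,D_m$ of $X$. Kato's theorem supplies a hull $A\in \wh C_{W[[Q]]}$ of the charted deformation functor $D_{(X^{\log},\beta)}$, and it suffices to prove that $A$ is formally smooth over $W[[Q]]=W[[u_1,\ldots,u_m]]$.

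Next I would verify formal smoothness by showing that the obstruction space vanishes. Log deformation theory identifies the obstructions to lifting a charted deformation across a small extension $A'\twoheadrightarrow A$ in $\wh C_{W[[Q]]}$ with classes in $H^2(X,T^{\log}_{X/k})$, where $T^{\log}_{X/k}:=\Hom_{\mathcal O_X}(\omega^1_{X/k},\mathcal O_X)$ is the log tangent sheaf. Since $X^{\log}/\spec{k}^{\log}$ is log smooth of relative dimension two, Proposition~\ref{prop:log-smooth differentials} implies that $\omega^1_{X/k}$ is locally free of rank two with determinant $\omega^2_{X/k}\simeq \mathcal O_X$; wedge duality then yields $T^{\log}_{X/k}\simeq \omega^1_{X/k}$. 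Invoking log Serre duality on the projective Cohen--Macaulay surface $X$ with dualizing sheaf $\omega^2_{X/k}\simeq \mathcal O_X$ would give
\[
H^2(X,T^{\log}_{X/k})^\vee \simeq H^0(X,\omega^1_{X/k}\otimes \omega^2_{X/k}) \simeq H^0(X,\omega^1_{X/k}),
\]
and this group vanishes: for Type~I it is the classical fact $H^0(X,\omega^1_X)=0$ for a smooth $K3$ surface, while for Types~II and~III it is precisely Theorem~\ref{thm:Nakka_type}. Hence $A$ is formally smooth over $W[[Q]]$, and pulling back the formal universal object along a section $W[[Q]]\hookrightarrow A$ yields a formal log smooth charted deformation of $X^{\log}$ over $\Spf W[[u_1,\ldots,u_m]]^{\log}$.

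To finish, I would algebraize and identify the components. Projectivity of $X$, together with the lifting of an ample polarization (controlled by $H^2(X,\mathcal O_X)\simeq k$ via the trivial dualizing sheaf), allows Grothendieck's existence theorem to promote the formal family to an algebraic log smooth family $\fk X^{\log}\to \spec{W[[u_1,\ldots,u_m]]}^{\log}$. Triviality of $\omega^2_{\fk X/W[[u_1,\ldots,u_m]]}$ is automatic from its triviality on the special fiber combined with $H^1(X,\mathcal O_X)=0$ (cited after Definition~\ref{def:combinatorial}). The chart $\N^m\to M_{\fk X}$ supplies, for each generator $e_i$, a local equation for the lifted double curve $\fk D_i\subset\fk X$, and the irreducible components $\fk X_i\subset\fk X$ are recovered as the deformations of $X_i$ determined by the local product structure described on page~\pageref{logstructurebarX}. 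The main obstacle will be the vanishing $H^2(X,T^{\log}_{X/k})=0$, which hinges on a clean form of log Serre duality on the singular SNCL surface $X$ and on the identification $T^{\log}_{X/k}\simeq \omega^1_{X/k}$ provided by the triviality of $\omega^2_{X/k}$; once these are in place, the argument reduces to Nakkajima's Theorem~\ref{thm:Nakka_type}.
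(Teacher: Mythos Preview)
The paper does not give a self-contained proof of this theorem: it simply cites Deligne for Type~I and Nakkajima's Propositions~5.9 and~6.8 for Types~II and~III. Your proposal, by contrast, is a genuine outline of the argument behind those citations, and it captures the essential strategy that Nakkajima uses: exploit Kato's hull (Proposition~\ref{prop:2.1}), show unobstructedness by computing $H^2(X,T^{\log}_{X/k})$, use the isomorphism $T^{\log}_{X/k}\simeq\omega^1_{X/k}$ coming from $\omega^2_{X/k}\simeq\mathcal O_X$, apply Serre duality to reduce to $H^0(X,\omega^1_{X/k})=0$, and finally algebraize. This is exactly the skeleton of Nakkajima's proof, and you have correctly identified Theorem~\ref{thm:Nakka_type} as the key input.

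Two points deserve more care than you give them. First, the lifting of the ample line bundle is not automatic: the obstruction lies in $H^2(X,\mathcal O_X)\simeq k$, which is nonzero, so an arbitrary section $W[[Q]]\hookrightarrow A$ need not carry a polarization. Nakkajima deals with this by cutting down inside the hull to the locus where the polarization extends (this costs one parameter, which is harmless since the hull has dimension larger than $m$ over $W$), and only then applies Grothendieck's existence theorem. Your sentence ``controlled by $H^2(X,\mathcal O_X)\simeq k$'' glosses over this step. Second, you correctly flag log Serre duality on the SNCL surface as the delicate point; in Nakkajima's paper this is handled via Tsuji's log duality, and is not entirely formal. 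With these two caveats, your outline is sound and is in fact more informative than the paper's own proof, which defers everything to the literature.
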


\begin{proof}
	If $\oln X$ is smooth, that is of type I, then this is the result of Deligne~\cite[ corollary 1.8]{Deligne:1981aa}. If $\oln X^{\log}$ is of type II or type III, then it is~\cite[proposition 5.9 and proposition 6.8]{Nakkajima:2000tl}.
\end{proof}

Nakkajima also gives the following corollaries:

 \begin{corollary}\cite[corollary~6.9]{Nakkajima:2000tl}
	 \label{cor:nakkajimasmoothing}
 Let $X$ be a projective $SNCL$ $K3$ surface over $k$. The following holds:
 \begin{itemize}
 \item There exists a projective semistable family $\mathcal Y$ over $\spec{W}$ whose special fiber is $X$.
 \item There exists a projective semistable family $\mathcal Y$ over $\spec{k[[t]]}$ whose special fiber is $X$.
 \end{itemize}
 \end{corollary}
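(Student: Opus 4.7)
The plan is to deduce the corollary directly from Theorem~\ref{thm:p-adic famillyNakkajima} by specializing the formal parameters $u_1,\ldots,u_m$ along two appropriate morphisms of log schemes. Recall that that theorem produces a projective logarithmically smooth family $\mathfrak{X}^{\log}\to S^{\log}$ with $S=\spec{W[[u_1,\ldots,u_m]]}$ and log structure given by the chart $\N^m\to W[[u_1,\ldots,u_m]]$, $e_i\mapsto u_i$, whose special fiber over $\spec{k}^{\log}$ recovers $X^{\log}$. Because the singular locus decomposes as a disjoint union $X_{\sing}=D_1\sqcup\cdots\sqcup D_m$ of its connected components, every geometric point of $X$ meets at most one $D_i$; étale-locally the charted deformation therefore has the simple form $W[[u_1,\ldots,u_m]][x_0,\ldots,x_d]/(x_0x_1\cdots x_r-u_i)$ for the unique index $i$ labelling the local component of the singular locus (and the smooth local model away from $X_{\sing}$).

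For the first statement, equip $\spec{W}$ with its standard semistable log structure $\N\to W$, $1\mapsto p$, and define a morphism of log schemes $\iota_W\colon\spec{W}^{\log}\to S^{\log}$ whose underlying ring homomorphism sends every $u_i$ to $p$ and whose chart map is the summation $\N^m\to\N$, $e_i\mapsto 1$. Set $\mathcal{Y}:=\mathfrak{X}\times_S\spec{W}$ with the pulled-back log structure. The étale-local equations above become $W[x_0,\ldots,x_d]/(x_0x_1\cdots x_r-p)$, whose spectra are regular and flat over $W$, with special fibers equal to the strict normal-crossings divisors $x_0x_1\cdots x_r=0$ that glue back to $X$; projectivity is preserved under base change. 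This produces the desired projective semistable model over $\spec{W}$ with special fiber $X$.

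For the second statement, proceed identically with $\iota_t\colon\spec{k[[t]]}^{\log}\to S^{\log}$, whose ring map sends $p\mapsto 0$ and $u_i\mapsto t$ for every $i$, the log structure on $\spec{k[[t]]}^{\log}$ being $\N\to k[[t]]$, $1\mapsto t$, together with the same summation chart $\N^m\to\N$. The pullback $\mathcal{Y}:=\mathfrak{X}\times_S\spec{k[[t]]}$ is étale-locally $k[[t]][x_0,\ldots,x_d]/(x_0x_1\cdots x_r-t)$, hence regular and projective over $\spec{k[[t]]}$, with special fiber $X$, giving the equicharacteristic semistable family.

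The main technical step is the étale-local verification that under these specializations the pulled-back family is semistable in the classical sense, i.e.\ that the total space is regular and the special fiber is a reduced strict normal crossings divisor. This rests on the compatibility between the summation map $\N^m\to\N$ and the diagonal monoid maps $\N\hookrightarrow\N^{r+1}$ appearing in the SNCL chart of $X^{\log}$, which ensures that the single relation $x_0\cdots x_r=u_i$ specializes correctly to the standard semistable equation $x_0\cdots x_r=p$ (respectively $=t$). No essential new input beyond Theorem~\ref{thm:p-adic famillyNakkajima} is required: the corollary is an unpacking of that charted-deformation result along the two natural base changes $\iota_W$ and $\iota_t$.
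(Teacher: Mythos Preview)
Your proposal is correct and follows essentially the same route as the paper: specialize the charted deformation $\mathfrak{X}\to\spec{W[[u_1,\ldots,u_m]]}$ of Theorem~\ref{thm:p-adic famillyNakkajima} along $u_i\mapsto p$ for the first statement and along $u_i\mapsto t$, $p\mapsto 0$ for the second. The only cosmetic difference is that the paper (following Nakkajima) factors the second specialization as $W[[u_1,\ldots,u_m]]\to W[[t]]\to k[[t]]$ rather than writing the composite directly; your added \'etale-local verification of semistability is a helpful elaboration but not a new ingredient.
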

 \begin{corollary}\cite[corollary~6.11]{Nakkajima:2000tl}
 \begin{itemize}
 \item Let $K_0$ be the fraction field of $W$ (resp. $k[[t]]$). The generic fiber $X_{K_0}$ of $\mathcal Y$ is a smooth $K3$ surface.
 \item Let $k$ be a field of characteristic $p>0$ and let $X^{log}$ be a projective SNCL $K3$ surface over $\spec{k}^{log}$. Then $\dim_k H^1(X, \omega_{X/k}^1) =20$.
 \end{itemize}
 \end{corollary}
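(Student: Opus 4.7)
The statement packages two claims: (1) the semistable lifting of $X$ from Corollary~\ref{cor:nakkajimasmoothing} has a $K3$ generic fibre, and (2) the log Hodge number $h^{1,1}$ of $X$ equals $20$. I treat them in that order. For~(1) invoke Corollary~\ref{cor:nakkajimasmoothing} to obtain a projective semistable family $\mathcal Y\to\spec W$ (or $\mathcal Y\to\spec{k[[t]]}$) with closed fibre $X$. Since $\mathcal Y$ is regular and its closed fibre is a reduced normal crossings divisor, the generic fibre $\mathcal Y_{K_0}$ is smooth and projective of dimension $2$, so only the two defining vanishings of a $K3$ surface remain to be verified on it. For triviality of the canonical, realise $\mathcal Y$ as the pullback of the log smooth charted deformation $\fk X\to\spec{W[[u_1,\dots,u_m]]}^{\log}$ of Theorem~\ref{thm:p-adic famillyNakkajima} along a morphism $\spec W\to\spec{W[[u_1,\dots,u_m]]}$ sending each $u_i$ to a uniformiser: the relative log dualising sheaf $\omega^2_{\fk X/W[[u_1,\dots,u_m]]}$ is trivial, and after restriction to the generic point the log structure on $\mathcal Y$ disappears, giving $\omega^2_{\mathcal Y_{K_0}/K_0}\simeq\mathcal O$. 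For the vanishing of the irregularity, apply upper semicontinuity to the proper flat morphism $\mathcal Y\to\spec W$: since $H^1(X,\mathcal O_X)=0$ is built into the definition of an SNCL $K3$, the same vanishing holds on $\mathcal Y_{K_0}$.

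For~(2) the plan is to transport $\dim_k H^1(X,\omega^1_{X/k})$ through the log smooth family $\fk X\to\spec{W[[u_1,\dots,u_m]]}^{\log}$ to its smooth generic $K3$ fibre, where it can be identified with the usual Hodge number $h^{1,1}=20$. The key inputs are the $E_1$-degeneration of the Hodge--de Rham spectral sequence for proper log smooth morphisms and log de Rham base change (Kato--Illusie), which jointly yield local constancy on the base of the functions $s\mapsto \dim_{\kappa(s)} H^q(\fk X_s,\omega^p_{\fk X_s/s})$; evaluating at the generic point, where the log structure is trivial and the fibre is a smooth $K3$, then gives $\dim_k H^1(X,\omega^1_{X/k})=20$.

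The main technical obstacle is precisely this constancy statement over a mixed-characteristic regular base with an $m$-parameter log structure. A more self-contained alternative argues numerically on $X$ itself: log Serre duality on $X^{\log}/\spec k^{\log}$ together with the isomorphism $\omega^2_{X/k}\simeq\mathcal O_X$, the vanishing $H^1(X,\mathcal O_X)=0$ from the definition, and the vanishing $H^0(X,\omega^1_{X/k})=0$ from Theorem~\ref{thm:Nakka_type}, pin down $h^{2,0}=h^{0,2}=1$ and $h^{1,0}=h^{0,1}=h^{2,1}=h^{1,2}=0$ as all the weight-two log Hodge numbers of $X$ other than $h^{1,1}$; combining with $E_1$-degeneration of the log Hodge--de Rham spectral sequence on $X$ and the equality $\dim_k H^2_{\log\text{-}dR}(X/k)=22$ (obtained via~(1) by lifting to characteristic zero and comparing with the second Betti number of a smooth complex $K3$), the identity $1+h^{1,1}+1=22$ yields $h^{1,1}=20$.
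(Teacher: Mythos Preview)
The paper does not give its own proof of this corollary; it is quoted from Nakkajima, and the paragraph that follows it only explains how the family $\mathcal Y$ of Corollary~\ref{cor:nakkajimasmoothing} is obtained by specialising the universal deformation, not how the present statement is deduced. So there is no in-paper argument to compare against, and your reconstruction has to be judged on its own.

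Your argument for part (1) is correct and is the natural one: smoothness of the generic fibre is built into semistability, triviality of the canonical comes from the triviality of $\omega^2_{\fk X/W[[u_1,\dots,u_m]]}$ in Theorem~\ref{thm:p-adic famillyNakkajima} together with the fact that the log structure becomes trivial over the generic point, and $H^1(\mathcal Y_{K_0},\mathcal O)=0$ follows from upper semicontinuity and $H^1(X,\mathcal O_X)=0$.

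For part (2), your second, self-contained route is sound and is essentially how Nakkajima argues. Two refinements are worth making. First, Theorem~\ref{thm:Nakka_type} is stated only for Types~II and~III; in Type~I the surface is a smooth $K3$ and the vanishing $H^0(X,\omega^1_{X/k})=0$ (indeed the full equality $h^{1,1}=20$) is the classical statement, so that case should be handled or cited separately. Second, the step ``$\dim_k H^2_{\log\text{-}\dR}(X/k)=22$ via lifting to characteristic zero'' hides a base-change issue: upper semicontinuity of hypercohomology over $\spec W$ only gives $\dim_k H^2_{\log\text{-}\dR}(X/k)\geq 22$, and an upper bound would require, e.g., torsion-freeness of $H^2_{\log\text{-}\cris}(X/W)$. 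A cleaner way to close the argument is to bypass the total $H^2$ and use constancy of the Euler characteristic of the locally free sheaf $\omega^1_{\mathcal Y/W}$: flatness gives $\chi(X,\omega^1_{X/k})=\chi(\mathcal Y_{K_0},\omega^1_{\mathcal Y_{K_0}/K_0})=-20$, and since you already have $h^{1,0}(X)=h^{1,2}(X)=0$ from Theorem~\ref{thm:Nakka_type} and log Serre duality, this yields $h^{1,1}(X)=20$ directly, with no appeal to $E_1$-degeneration needed.
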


In the argument for the proof, \cite{Nakkajima:2000tl} considers the family $$\fk X\to \spec{W[[u_1,\ldots, u_m]]}$$ and  specializes $W[[u_1,\ldots, u_m]]\to W$ by sending $u_i\to p$  getting the desired $\mathcal Y\to \spec{W}$. Similarly  he considers the map $$W[[u_1,\ldots, u_m]]\to W[[t]]\to W[[t]]/p =k[[t]]$$ and sends $u_i\to t$ and then reduces modulo $p$ to get the $\mathcal Y\to \spec{k[[t]]}$.

 These results are  Nakkajima's generalization, for the logarithmic case, of the results of Deligne~\cite{Deligne:1981aa} and Friedman~\cite{Friedman:1983er}.

\section{One Parameter Deformation of a Semistable $K3$-Surface} 
\label{sub:one_parameter_degeneration_of_k3_surfaces}

\begin{definition}\label{def:sncl-reduction}
	We say that a $K3$-surface $X_K$ over $K$, has \emph{combinatorial reduction}, if $X_K$ has a flat, proper and projective semistable model $X\To \mathcal O_K$ with special fiber $\oln X$ a SNCL $K3$-surface. 
\end{definition}

Note that if $X_K$ has combinatorial reduction, and if $\oln X$ is the special fiber of its semistable model, then $\oln X$ is combinatorial in the sense of definition~\eqref{def:combinatorial}.

The main result of the section is the following theorem:

\begin{theorem}\label{thm:liting of semistable K3}
Let $p>3$ be a prime number  and consider  $K$ be a finite extension of $K_0=W(k)[1/p]$ with $k$ algebraically closed.  Let $X_K\to\spec{K}$ be a $K3$-surface with combinatorial reduction (definition~\eqref{def:sncl-reduction} above). Then there exists a deformation $\mathcal X \to S:=\spec{W[[t]]}$ of  $\oln X$ satisfying the following: 
\begin{itemize}
    \item We let $0$ be the point of $S\ot_W K_0$, corresponding to the maximal ideal: $$t(W[[t]]\ot_W K_0)\subset W[[t]]\ot_W K_0,$$ then the fiber at $0$, $(\mathcal X\ot_W K_0)_0$, is a combinatorial $K3$-surface over $K_0$, of the same type of $\oln X$.
    \item For every point $x\in S\ot_W K_0$, with $x\ne 0$, then $(\mathcal X\ot_W K_0)_x$ is a smooth $K3$-surface over $k(x)$.
\end{itemize}
\end{theorem}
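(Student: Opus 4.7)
The plan is to pull back Nakkajima's universal formal deformation along the diagonal embedding $u_i\mapsto t$. First I would invoke theorem~\ref{thm:p-adic famillyNakkajima} applied to the projective SNCL $K3$-surface $\oln X^{\log}$ appearing as the special fiber of the semistable model $X\to\spec{\mathcal O_K}$; this produces a log smooth charted deformation $\mathfrak X^{\log}\to\spec{W[[u_1,\ldots,u_m]]}^{\log}$ equipped with closed subschemes $\mathfrak X_1,\ldots,\mathfrak X_N$ deforming the irreducible components of $\oln X$ and with $\omega^2_{\mathfrak X/W[[u_1,\ldots,u_m]]}$ trivial. Here $m$ is the number of geometrically connected components of $\oln X_{\sing}$. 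I would then define
\[
\mathcal X\,:=\,\mathfrak X\times_{\spec{W[[u_1,\ldots,u_m]]}}\spec{W[[t]]}
\]
via the local $W$-algebra morphism $u_i\mapsto t$, endowed with the pulled-back logarithmic structure; triviality of $\omega^2$ and the flatness and log smoothness of the family survive this base change. (Essentially the same diagonal specialization appears in the proof of corollary~\ref{cor:nakkajimasmoothing}, where sending $u_i\mapsto p$ and $u_i\mapsto t$ respectively yields the semistable families over $\spec W$ and $\spec{k[[t]]}$.)

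For property~(i), the closed point $0\in\spec{W[[t]]\ot_W K_0}$ corresponds to $t\mapsto 0$, whose preimage in $\spec{W[[u_1,\ldots,u_m]]}$ is the locus $u_1=\cdots=u_m=0$ taken in characteristic zero. The fiber there is log smooth over $\spec{K_0}^{\log}$ with the standard log-point structure, has irreducible components $\mathfrak X_i\ot K_0$ deforming the $X_i$, has trivial $\omega^2$, and satisfies $H^1(\mathcal O)=0$ by upper semicontinuity from $\oln X$. By definition~\ref{def:combinatorial} this is a combinatorial $K3$-surface over $K_0$, and since the intersection pattern of the $\mathfrak X_i$ matches that of the $X_i$ its type (I, II, or III) matches that of $\oln X$. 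For property~(ii), the ring $W[[t]]\ot_W K_0$ is a DVR with uniformizer $t$, so any $x\ne 0$ is the generic point, and its image in $\spec{W[[u_1,\ldots,u_m]]}$ lies in the open locus where every $u_i$ is invertible; there the chart $\N^m\to W[[u_1,\ldots,u_m]]$ lands in $\mathcal O^\times$, the log structure is trivial, and a log smooth morphism between log schemes with trivial log structure is smooth in the classical sense. Together with the triviality of $\omega^2$ and $H^1(\mathcal O)=0$, this gives a smooth $K3$-surface.

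The step I expect to be most delicate is verifying that the diagonal specialization $u_i\mapsto t$ genuinely smooths every double curve of $\oln X$ at $t\ne 0$ while preserving the full combinatorial type of $\oln X$ at $t=0$; one must rule out the possibility that collapsing $m$ independent deformation directions to a single parameter accidentally rigidifies some intersections while smoothing others. The key input that prevents such pathology is Nakkajima's explicit identification of each $u_i$ with the smoothing parameter at the $i$-th connected component of $\oln X_{\sing}$, so that the diagonal locus $u_1=\cdots=u_m=t$ simultaneously smooths every component of the singular locus precisely when $t\ne 0$, and preserves the full SNCL structure when $t=0$.
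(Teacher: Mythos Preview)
Your approach via the diagonal specialization $u_i\mapsto t$ is \emph{genuinely different} from what the paper does, and while the idea is natural (and, as you note, is how Nakkajima proves corollary~\ref{cor:nakkajimasmoothing}), the paper takes a more indirect geometric route precisely to avoid the ``delicate step'' you flag at the end.

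The paper's argument runs as follows. After invoking theorem~\ref{thm:p-adic famillyNakkajima} to obtain $\mathfrak X\to\mathfrak S=\spec{W[[u_1,\ldots,u_m]]}$, it does \emph{not} choose the curve $u_i\mapsto t$ a priori. Instead it (i) appeals to Artin's equisingular lifting theorem to produce a closed point $x_0$ of $\mathfrak S\otimes_W K_0$ whose fiber is a combinatorial $K3$ over $K_0$ of the same type as $\oln X$, (ii) defines $\mathfrak S_{\sing}\subset\mathfrak S$ as the Zariski closure of the locus of singular fibers in characteristic zero, observes it is a proper closed subscheme, and (iii) chooses a smooth curve $C\subset\mathfrak S$ through a closed point $y_0$ extending $x_0$ which is \emph{normal to} $\mathfrak S_{\sing}$, then takes $S=\spec{\widehat{\mathcal O}_{C,y_0}}\simeq\spec{W[[t]]}$. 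Property~(i) of the theorem is then immediate from the choice of $x_0$, and property~(ii) follows because normality to $\mathfrak S_{\sing}$ forces $C$ to meet the singular locus only at $y_0$. Thus the paper trades your explicit diagonal for a general-position argument plus Artin's theorem, which sidesteps entirely the question of whether the fiber over the origin in characteristic zero automatically inherits the combinatorial type.

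Two concrete issues in your write-up: first, $W[[t]]\otimes_W K_0=W[[t]][1/p]$ is \emph{not} a DVR---it is a one-dimensional regular domain with infinitely many closed points (one for each distinguished polynomial in $t$), so ``any $x\ne 0$ is the generic point'' is false. This is not fatal, since what you actually need is that $t$ is nonzero in $k(x)$ for $x\ne 0$, which is true. Second, and more seriously, your smoothness argument asserts that over $D(u_1\cdots u_m)$ the log structure on $\mathfrak X$ is trivial because the base log structure is; but log smoothness over a trivially-logged base does not force the source log structure to be trivial (toric varieties give counterexamples), so the step ``log smooth between trivial log schemes $\Rightarrow$ classically smooth'' is not available without further input on the log structure of $\mathfrak X$ itself. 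The paper's normal-curve argument avoids this by working with the honest singular locus rather than with log structures.
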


\begin{proof}
By theorem~\eqref{thm:p-adic famillyNakkajima} there exists a logarithmic smooth charter deformation of $\oln X $: $$\fk X^{\log}\To \fk S:=\spec{W[[u_1,u_2,\ldots, u_m]]}^{\log},$$ where $m$ is the number of geometrically connected components of $X_{sing}$ (so the deformation keeps information about the singularities), and that contain logarithmic deformations $\fk X_1, \fk X_2, \ldots, \fk X_N $ of the components of $\oln X= \oln X_1 \cup \oln X_2\cup\cdots\cup \oln X_N$. Moreover the logarithmic structure on $\fk S$ is the one given by the components $\fk X_i$ as on page \pageref{logstructurebarX} and proof by Kajiwara of \cite[thm. 6.8]{Nakkajima:2000tl}. In particular we have a lifting of each $\oln X_i$ to characteristic zero inside the corresponding $\fk X_i$. On the other hand by the work of [theorem 4.10]\cite{MR0422270} we know that $\oln X$ has an equisingular lifting to characteristic zero, and since $\fk S$ is obtained from the universal family of deformation of $\oln X$ (\cite[remak 3.6, prop.3.7]{Nakkajima:2000tl}), the equisingular lifting of $\oln X$ sits inside $\fk S$.

Let $\fk S \ot_W K_0$ be the scheme $\spec{W[[u_1,u_2,\ldots,u_m]]\ot_W K_0}^{\log}$ and let:
\[
(\fk S \ot_W K_0)_{sing}:=\set{x\in (\fk S\ot_W K_0) | (\fk X\ot_W K_0)_x\  \text{ is singular}}.
\]
Denote by $\fk S_{sing}$ the Zariski closure of $(\fk S\ot_W K_0)_{sing}$ in $\fk S$. 

Since being singular is a closed condition and $(\fk S\ot_W K_0)_{sing} \subset \fk S \ot_W K_0$ is a proper contention, we have that $\fk S_{sing}\subset \fk S$ is a proper closed immersion and therefore $$0\leq \dim \fk S_{sing}\leq \dim \fk S-1.$$
Let $x_0$ be the closed point of $(\fk S\ot _W K_0)_{sing}$ realizing the equisingular lifting to characteristic zero of $\oln X$, that is,  $(\mathcal X \ot_W K_0)_{x_0}$ is a combinatorial $K3$-surface over $K_0$ of the same type of $\oln X$, and let $y_0$ be a closed point of $\fk S_{sing}$ extending $x_0$.

Let $C$ be a smooth curve in $\fk S$ containing $y_0$ and normal to $\fk S_{sing}$. Let
$$ \widehat{\mathcal O}_{C,y_0} \simeq W[[t]]$$ denote the completion of the local ring of $C$ at $y_0$ with respect to the maximal ideal $\fk m$ of $y_0$. So we have a natural morphism $S:=\spec{W[[t]]}\to \fk S$. Let $\mathcal X\to S$ be the pull back of $\fk X\to \fk S$ with respect to $S\to\fk S$. Then $\mathcal X\to S$ satisfies the desired properties.
\end{proof}

 \begin{remark}
  If $X$ is the minimal semistable model for $X_K$ (which there exists for $p>3$~\cite{MR1269717,MR1689354}), then its special fiber $\oln X$ is automatically a SNCL $K3$-surface~\cite[section 4]{Maulik:2014aa} and~\cite{Nakkajima:2000tl}.
 \end{remark}



\section{Comparison Isomorphisms for Logarithmic K3-Surfaces} \label{cha:comparison_isomorphism_for_log_k3_surfaces}

In this section we recall the comparison isomorphism of~\cite{Andreatta:2012aa} and  for the special case in which $X_K$ is a smooth proper $K3$-surface over a $p$-adic  field $K$ (see remark~\ref{p-adicfield} on page~\pageref{p-adicfield}) and  $p>3$ is a fixed prime number.

As before we let $\mathcal O_K$ be the ring on integers of $K$ and we fix a uniformizer $\pi\in \mathcal O_K$. We also denote by $k= \mathcal O_K/\pi \mathcal O_K$ the residue field.

\subsection{$\bf p$-Adic Hodge Theory} \label{sec:_p_adic_Hodge_theory}

\subsubsection{Witt Vectors} \label{sub:witt_vectors}
Although we have already used Witt vectors in this paper, we would like to give a fast review of them, since it will be useful to recall the construction of the rings of periods. The standard reference for the proofs, properties and construction of Witt vectors is  the book~\cite[ch. 2 sec. 6]{SerreLF:1979}.

Let  $R$ be a perfect ring of characteristic $p$.

A \emph{strict $p$-ring with respect to $R$} is a ring $A$ (as always commutative and with one) such that $p$ is not nilpotent and $A$ is complete and separated with respect to the $p$-adic topology and with residue ring $A/pA = R$.

The  ring of Witt vectors with coefficients in $R$ is a strict $p$-ring with respect to $R$, and since $R$ is perfect, it is possible to construct at least one strict $p$-ring that in fact is unique, up to unique isomorphism. This ring is the ring of Witt vectors $W(R)$ over $R$.
Moreover by uniqueness  $W$ is functorial in $R$, that is, if we have a morphism $\phi:R\to S$ then it lifts to a map $W(\phi):W(R)\To W(S)$. In particular we have a lift to $W(R)$ of the Frobenius automorphism of $R$, which is also called the Frobenius automorphism.

For example, if $R= \F_p$, then $W(\F_p)=\Zp$. In general if $\F$ is a finite field, then $W(R)$ is the ring of integers of the unique unramified extension of $\Q_p$ whose residue field is $R$.  As a particular case, we have that if $K$ is a finite extension of $\Q_p$ and  $k= \OO_K/\pi\OO_K$ is its residue field, then $\Frac (W(k)) = K_0$ is the maximal unramified extension of $\Q_p$ in $K$. Another important example is when $R=\overline \F_p$ the algebraic closure of a finite field; in this case $W(R) = \mathcal O_{\widehat{\Q_p^{unr}}}$ the ring of integers of the maximal unramified extension of $\Qp$.

We want to understand the ring structure of $W(R)$. For $x=x_0\in R$ and for every $n$, choose a lifting ${\widetilde{x}_n}\in W(R)$ of $x^{p^{-n}}\in R$. The sequence $\set{{\widetilde{x}_n}}$ converges in $W(R)$. The \emph{Teichm\"uler map}  is the map: $[\ \,]:R\To W(R); \quad x\mapsto [x]:=\lim_n {\widetilde{x}_n}.$ The elements on the image of this map are called the \emph{Teichm\"uler elements}. It is multiplicative and a section of the natural projection. It turns out that the Teichm\"uler elements allows us to write any element $x\in W(R)$ in a unique way as
$x=\sum_{n=0}^\infty p^n[x_n], \text{ with } x_n\in R.$ Moreover, given two elements $x,y\in A$ we have that
\[
x+y=\sum_{n=0}^\infty p^n[S_n(x_n,y_n)] \quad\text{and}\quad xy=\sum_{n=0}^\infty p^n[P_n(x_n,y_n)]
\]
where $S_n, P_n$ are polynomials in $\Z[X_i^{p^{-n}}, Y_i^{p^{-n}}]$.

Given a $p$-adic field $K$ (see remark~\ref{p-adicfield} on page~\pageref{p-adicfield}), we denote by $\mathcal O_K$ its valuation ring, and we fix once and for all a uniformizer  $\pi \in \mathcal O_K$, so that its residue field is $k=\mathcal O_K/\pi\mathcal O_K$.  Denote by $\C_K= \widehat{\overline K}$; in case $k\subset \overline \F_p$ we have that $\C_K=\Cp$, the field of complex $p$-adic  numbers. We fix once and for all an algebraic closure $\overline K$ of $K$ and we denote by $G_K:=\Gal(\overline K/K)$ its absolute Galois group.

Let $\mu_{p^\infty} = \varprojlim_{n}\mu_{p^n}$ where $\mu_{p^n}:=\set{x\in \overline K : x^{p^n} = 1}$ with morphisms for every pair $n>m$:
\(
    \phi_{m,n}:\mu_{p^m} \to \mu_{p^n} \text{ sending } x\mapsto  x^{p^{n-m}}.
\)
Fix a primitive element $\xi \in \mu_{p^{\infty}}$\label{primitivelement} that is a sequence of primitive elements  $\xi = (1, \xi^{(1)}, \ldots, \xi^{(n)},\ldots )$ such that $(\xi^{(n+1)})^{p} = \xi^{(n)}$.

We have the following chain of fields:
\[
    K_0\subset K\subset K_n:=K(\mu_{p^n})\subset K_{\infty}:= K(\mu_{p^{\infty}}) \subset \overline K = \overline K_0\subset \C_K.
\]

If we denote by $\chi:G_K\To \Zp^*$ the cyclotomic character of $G_K$, that is the homomorphism of groups defined  by $\chi(\sigma)=\xi^{\chi(\sigma)}$ for every $\sigma\in G_K$, we have that the kernel of $\chi$ is exactly $H_K:=\Gal(\overline K/ K_{\infty})$ and therefore $\chi$ identifies $\Gamma_K:=\Gal(K_{\infty}/K)= G_K/H_K$ with the image of $\chi$ which is an open subgroup of $\Zp^*$.

Denote by $\OO:=W[[Z]]$ the polynomial ring with coefficients in $W=W(k)$ and variable $Z$. Consider the $W$-algebra homomorphism \(
    \OO\To\OO_K;\text{ sending } Z\mapsto \pi.
\) Finally denote by $P_\pi(Z)$  the minimal polynomial of $\pi$ in $\OO$.

\subsubsection{$\bf p$-Adic Representations} \label{sec:_p_adic_representations}

Now we recall some basics on the theory of $p$-adic representations, the main reference is~\cite{10017177619} in~\cite{periodespadiques}. Most of the results presented in this section can be found also in~\cite{MR2023292}.

\begin{definition}\label{def:padicrepresentation}
A $p$-adic  representation of $G_K$ of dimension $d$ is a continuous group homomorphism $\rho:G_K\To GL(V)$ for a finite dimensional (of dimension $d$) $\Q_p$-vector space $V$. Equivalently, it is a $\Qp$-vector space of dimension $d$ together with a continuous linear action of $G_K$.
\end{definition}

The collection of $p$-adic  representations form a category whose morphisms are given by $\Q_p$-linear and equivariant $G_K$-maps.  We denote by $\Rep_{\Q_p}(G_K)$  the category of $p$-adic  representations.

An important family of $p$-adic  representations of dimension one are the so called Tate twists of $\Q_p$. Precisely, let $r\in \Z$ and define $\Q_p(r)$ to be the one dimensional $\Q_p$-vector space  $\Q_p e_r$ with action of $G_K$ given by twisting by the $r$-power of the cyclotomic character, that is $\sigma(e_r)=\chi(\sigma)^r e_r$ for every $\sigma\in G_K$; this is called the $r$-th Tate twist of $\Q_p$.
If $V$ is a $p$-adic representation, we can construct a new $p$-adic representation by twisting $V$, that is, we let \(
    V(r):=V\otimes_{\Q_p} \Q_p(r);
\) which is again a $p$-adic  representation of dimension $\dim V$. If $X_K=A_K$ is an abelian variety over $K$, then the Tate module  $V_p:=T_pA\otimes_{\Zp} \Q_p$ is a $p$-adic  representation of dimension $d=2(\dim A)$. 
If $X_K$ is a $K3$-surface over $K$, then $V=\etl 2{X_{\overline K}}{\Q_p}$ is a $p$-adic representation of dimension $22$.

In order to study $p$-adic representations, Fontaine et al. constructed certain rings that are known as \emph{rings of periods}. They are topological $\Q_p$-algebras $B$, together with an action of $G_K$ and depending on $B$, some additional structures like filtrations, Frobenius, monodromy operator, etc. He also observed that the $B^{G_K}$-modules $D_B(B)$ defined as
\(
    D_B(V):=(B\ot_{\Q_p} V)^{G_K}
\) reveal important properties of the $p$-adic  representation $V$.

The $\Q_p$-algebra $B$ is $G_K$-regular if for any $b\in B$ such that the line $\Q_p b$ is $G_K$-stable, we have that $b\in B^*$. Note that if $B$ is $G_K$ regular, then for every $b\ne 0$ in $G^{G_K}$  the line $\Q_p b$ is $G_K$-stable, therefore for every $b\in B^{G_K}$, $b\in B^*$ and since $b^{-1}$ is also in $B^{G_K}$ we have that $B^{G_K}$ is a field. If $B$ is $G_K$ regular, then we have that $\dim_{B^{G_K}} D_B(V) \leq \dim_{\Q_p} V$.
\begin{definition}\label{def:Badmissible}
A $p$-adic  representation $V$ is $B$-admissible if $\dim_{B^{G_K}} D_B(V) = \dim_{\Q_p} V.$
\end{definition}

\subsubsection{The Ring of Periods $B_\dR$ and De Rham Representations} \label{sub:the_ring_b_dr_}
Let $R$ be the set of sequences $x=(x^{(0)},x^{(1)},\ldots, x^{(n)},\ldots)$ of elements in $\mathcal{O}_{\C_K}$  such that $(x^{(n+1)})^p=x^{(n)}$. We endow $R$ with a structure of  a ring with  product $*$ and sum $+$ laws defined as $x*y=(x^{(n)}y^{(n)})_{n\in\N}\text{ and } x+y=(s^{({n})})_{n\in\N}$
where $s^{(n)}=\lim_{m\to\infty}(x^{(n+m)}+y^{(n+m)})^{p^m}$
which converge in $\mathcal{O}_{\C_K}$.
With these operations  $R$ is a commutative domain whose unit element is $1=(1,1,\ldots,1)$. This ring is usually denoted by $\bf{\widetilde E}^+=\bf{\widetilde E}^+_{\C_K}$.
Also note that $p* 1=\lim_{m\to +\infty}{\underbrace{(1+\ldots + 1)}_{p-\text{times}} }^{p^m}=0$, thus $R$ is of characteristic  $p$. The Frobenius $x=(x^{(n)})\mapsto x^p=((x^{(n)})^p)$ on $R$ is  an isomorphism, and so, $R$ is perfect ring.

Even more, we have a natural action of $\Gal(\overline K/K)$ on $R$ trough its action on $\OO_{\C_K}$ and a valuation defined as $\text{val}(x)=\text{val}(x^{(0)}).$
With the topology induced by the valuation, $R$ is separated and complete with a residue field $R/\{x|\text{val}(x)>0\}\cong\overline k$.

Since $R=\bf{\widetilde E}^+$ is a perfect ring we can consider the Witt vectors $A_{\inf}:=W(R)$ with coefficients in $R$. Every element of $A_{\inf}$ can be written in a unique way as the sum $\sum_{n=0}^{+\infty}p^n[x_n]$
where $x_n\in R$ and $[x_n]$ is its multiplicative representative or Teichm\"uler  representative in $W(R)=A_{\inf}$.
We have a surjection
$$\theta: A_{\inf}\to \OO_{\C_K}; \qquad \sum_{n=0}^{+\infty}p^n[x_n]\mapsto \sum_{n=0}^{+\infty}p^nx_n^{(0)}.$$
Remark that $\theta([\overline \pi])=\pi$ and $\theta([\overline p])=p$ and that  $\ker{\theta}$ is a principal ideal generated by $p-[\overline{p}]$, where $\overline{p}:=(p^{(n)})\in R$ is such that $p^{(0)}=p$, also  $\overline\pi \in R$ such that if $\overline\pi=(\pi^{(n)})$ then $\pi^{(0)}=\pi$. Note that the primitive element (see page\pageref{primitivelement}) $\xi$ is in $R$ and that $\theta(1-[\xi]) = 0$.
The ring $A_{\inf}$ is complete for the topology defined by the ideal $(p,\ker(\theta))=(p,[\underline{p}]).$

\begin{definition}
  The ring $B_{\dR}^+$ is the completion of $A_{\inf}[1/p]$ with respect to the ideal $\ker(\theta)=(p-[\overline{p}])$.
\end{definition}
We extend the surjection $\theta:A_{\inf}\to\OO_{\C_K}$ to $\theta:B_{\dR}^+\to\C_K$.
We have that $B_{\dR}^+$ is a complete ring with a discrete valuation  and  maximal ideal $\ker\theta=(p-[\overline{p}])B_{\dR}^+$ and residue field
$B_{\dR}^+/\ker(\theta)\cong \C_K.$

We can consider several topologies in $B_{\dR}^+$.
  We endow $B_{\dR}^+$ with the topology so that $p^mW(R)+(\ker\theta)^k$
  forms a base of neighborhoods of $0$, where  $(m,k)\in\N^2$. $B_{\dR}^+$ is complete and separated for this topology.

  There exists a natural and continuous action of $G_K$ in $B_{\dR}^+$ through the action on $R$ and this action commutes with $\theta$.

  $\overline\Q_p$ is identified canonically with  the algebraic closure of $\Q_p$ in $B_{\dR}^+$ and the following diagram commutes:
  
\[
\xymatrix{
\overline{\Q}_p \ar[r] \ar@{=}[d] & B_{\dR}^+ \ar[d]^{\theta} \\
\overline{\Q}_p\ar[r] & \mathbb C_{K}.}
\]

In fact, in the case where we give $\overline\Q_p$ the topology induced by $B_{\dR}^+$ (which is not $p$-adic), Colmez proved that $B_{\dR}^+$ is the completion of $\overline\Q_p$ for this topology, and thus $\overline\Q_p$ is dense in $B_{\dR}^+$.
\begin{definition}
    \label{def:BdR} We define $B_{\dR}$ as the fraction field of $B_{\dR}^+$, that is $B_{\dR}:=\Frac(B_{\dR}^+).$
\end{definition}
We extend naturally $\theta$ to $B_{\dR}$ and we give to it a filtration, defined as $$\Fil^i{B_{\dR}}:= (\ker(\theta))^i.$$  Remark that if  $x  \in \Fil^1(B_{\dR}) = \ker(\theta)$, is non zero,  then $B_{\dR}=B_{\dR}^+[x^{-1}]$.

Since  $\theta(1-[\xi])=0$ the element $1-[\xi]$ is small with respect to the topology on $B^+_{\dR}$ and the logarithm of this element converges in $B^+_{\dR}$, that is, there exists an element $t\in B^+_{\dR}$  such  that
\[
    t=\log([\xi]):=-\sum_{n=1}^{\infty}\frac{(1-[\xi])^n}{n}.
\]
If $\sigma\in G_K$ then $\sigma*t = \sigma(\log([\xi])) = \log([\xi^{\chi(\sigma)}]) =\chi(\sigma)t.$ Moreover since $t\in Fil^1 (B_{\dR})$ we also have that $B_{\dR}= B_{\dR}^+[1/t]$ and the filtration is such that $\Fil^i B_{\dR}= t^iB_{\dR}^+$.
The field $B_{\dR}$ satisfies that $B_{\dR}^{G_K} = K$.
We say that a $p$-adic  representation $V$ is de Rham if $V$ is $B_{\dR}$-admissible in the sense of definition~\ref{def:Badmissible}.

\subsubsection{The Ring of Periods $B_\cris$ and Crystalline Representations} \label{ssub:construction_of_b_cris_}

$A_{\cris}$ is the $p$-adic  completion of the divided power envelope of $A_{\inf}$ with respect to the ideal generated by $p$ and $\ker(\theta)$. We endow $A_{\cris}$ with the $p$-adic  topology and the divided power filtration. 

 \begin{definition}\label{def:Bcris}
 We define $B_{\cris}$ as the ring:
 \(
     B_{\cris}:=A_{\cris}[1/t]
 \)
 with the inductive limit topology and filtration given by:
 \[
     \Fil^i B_{\cris}:=\sum_{m\in\N}t^{-m}\Fil^{m+i}A_{\cris}.
 \]
 \end{definition}

We have that $B_{\cris}^{G_K}=K_0$. $B_{\cris}$ has a Frobenius $\phi$ compatible with the Frobenius of $W$ and is such that $\phi(t)=pt$. We have that $B_{\cris}$ is an algebra over $K_0$ which is a subring of $B_{\dR}$ and $G_K$-stable.

A $p$-adic representation $V$ is \emph{crystalline}, if it is  $B_{\cris}$-admissible as in definition~\ref{def:Badmissible}.

\subsubsection{The Ring of Periods $B_\st$ and Semistable Representations}  \label{sub:b_st}
 \begin{definition}\label{def:Bst}
We define $B_{\st}$ as the ring of polynomials $B_{\cris}[Y]$ on the variable $Y$ such that:
 \begin{itemize}
     \item We extend the Frobenius $\phi$ of $B_{\cris}$ to $B_\st$ by letting $\phi(Y)=Y^p$.
     \item We extend the action of $G_K$ on $B_{\cris}$ by
 \[
     \sigma*Y= Y+c(\sigma)t:\quad\text{ for } \sigma\in G_K
 \]
 where $c(\sigma)$ is defined by the formula $\sigma(p^{1/p^n})=p^{1/p^n}(\xi^{(n)})^{c(\sigma)}$.
 \item We define a \emph{Monodromy} operator on it as $N_{st}:=-d/dY$.
 \end{itemize}
 \end{definition}
 
 A $p$-adic  representation $V$ is \emph{semistable} if it is $B_{\st}$-admissible.

 We have that $B_\st$ is a $K_0$-algebra with an action of $G_K$ and containing $B_{\cris}$. Moreover $B_{\st}^{G_K}=K_0$ and  $B_{\st}^{N_{st}=0}=B_{\cris}$.
 
 \begin{remark}\label{rem:st-cris}
	In particular note that a semistable representation $V$ is crystalline it the monodromy $N_{st}=0$.  
 \end{remark}

\subsubsection{The Ring of Periods $B_{\log}$} \label{ssub:Blog}
For this constructions the main reference is~\cite[sec 3]{Kato:1994we}.

We denoted by $\OO=W[[Z]]$ for $W=W(k)$. Let $\OO_{\cris}$ the $p$-adic  completion of the divided power envelope of $\mathcal O$ with respect to the ideal $(p,P_{\pi}(Z))$, where $P_{\pi}(Z)$ is the minimal polynomial of $\pi$ with coefficients on $W$. We extend the Frobenius of $W$ to $\mathcal O$ by letting  it act on $Z$ as $Z\mapsto Z^p$ and the usual Frobenius on $W$. Finally let $\omega^1_{\cris/W}\simeq \mathcal O_{\cris}\tfrac{dZ}{Z}$ be the continuous log 1-differential forms of $\mathcal O_{\cris}$ relative to $W$.

 Define $A_{\log}$ as the $p$-adic  completion of the log divided power envelope of the morphism ring $A_{\inf}\ot_{W} \OO$ with respect to the kernel of the morphism
 \[
  \theta\ot \theta_{\mathcal O}: A_{\inf}\ot_W \mathcal O \To \mathcal O_{\C_K}.
 \]

Consider the element $u:=\frac{[\overline \pi]}{Z}$. Then we have that $A_{\log}$ is isomorphic to the $p$-adic  completion  $A_{\cris}\set{\langle V\rangle}$ of the divided power polynomial ring over $A_{\cris}$ in the variable $V$ by a morphism:
\[
A_{\cris}\set{\langle V\rangle}\To A_{\log}; \quad V\mapsto \frac{[\overline \pi]}{Z}-1 = u-1.
\]
Then $A_{\log}\simeq A_{\cris}\set{\langle u-1\rangle}$.

We endow $A_{\log}$ with the $p$-adic  topology and the divided power filtration.
\begin{definition}\label{def:Blog}
We define the ring $B_{\log}$ as the ring $B_{\log}:=A_{\log}[t^{-1}]$ with the inductive limit topology and filtration defined by
\[
    \Fil^n B_{\log}:=\sum_{m\in \N}\Fil^{n+m} A_{\log}t^{-m}.
\]
\end{definition}

We have a Frobenius on $A_{\log}$ that extends the Frobenius on $A_{\cris}$ by letting $u\mapsto u^p$ and we extend it to $B_{\log}$ by letting $t\mapsto pt$.

We have a continuous action on $B_{\log}$ of the group $G_K$ acting trivially on $W$ and on $\mathcal O$, and acting on $A_{\inf}$ through the action on $\mathcal O_{\C_K}$. Moreover we have a derivation on $B_{\log}$
\[
    d:B_{\log}\To B_{\log}\frac{dZ}{Z}
\]
which is $B_{\cris}$ linear and satisfies $d((u-1)^{[n]}) = (u-1)^{[n-1]}u\frac{dZ}{Z}$.

\begin{definition}\label{def:BlogMonodromy}
The Monodromy operator on $B_{\log}$ is the operator
\[
    N_{\log}:B_{\log}\To B_{\log}; \qquad\text{ such that }\quad d(f)=N_{\log}(f)\frac{dZ}{Z}.
\]
\end{definition}

We can recover the ring $B_{st}$ from $B_{\log}$ by considering the largest subring of $B_{\log}$ in which $N_{\log}$ acts as a nilpotent operator~\cite[theorem 3.7]{Kato:1994we}.

\begin{definition}
	A p-adic representation $V$ is $B_{\log}$-admissible if:
	\begin{itemize}
		\item  $D_{\log}(V):=\left(B_{\log} \otimes_{\Qp} V \right)^{G_K}$ is a free $B_{\log}^{G_K}$-module;
		\item the morphism $B_{\log} \otimes_{B_{\log}^{G_K}} D_{\log}(V) \To B_{\log}\otimes_{\Qp} V$ is an isomorphism, strictly compatible with filtrations.
	\end{itemize}
\end{definition}

In this case (see~\cite[sec 6.1]{Breuil:1997aa}) $M:=D_{\log}(V)$ is endowed with a monodromy operator $N_M$ compatible with $N_{\log}$ via  Leibniz rule, a decreasing exhaustive filtration $\{Fil^nM\}$ which satisfies Griffiths' transversality with respect to $N_M$ and such that the multiplication map $B_{\log}^{G_K} \times M\to M$ is compatible with the filtrations, a semilinear Frobenius morphism $\phi_M:M\to M$ such that $N_M\circ \phi_M = p \phi_M\circ N_M$ and with determinant invertible in $B_{\log}^{G_K}$. That is $M$ is a finite free filtered $(\phi, N)$-module over $B_{\log}^{G_K}$. 


\subsubsection{An Admissibility Criterion}  
\label{ssub:an_admissibility_criterion}
We recall the admissibility criterion of~\cite[2.1.1 page 140]{Andreatta:2012aa} which is very similar to the admissibility criteria on~\cite[theorem 4.3]{Colmez:2000vq}.

Let $M$  be a finite free $B_{\log}^{G_K}$-module, which is a finite $(\phi,N)$-module. The map $$B_{\log}\to B_{dR};\quad  Z\mapsto \pi$$ has image $\oln B_{\log}$.  We define
\[
    V_{\log}^0(M):=(B_{\log}\otimes_{B_{\log}} M)^{N=0,\, \phi=1}
\]
and
\[
    V_{\log}^1(M):=(B_{\log}\otimes_{\oln B_{\log}} M)/\Fil^0(\oln B_{log}\ot_{B_{\log}^{G_K}} M).
\]
Let $\delta(M):V_{\log}^0\To V_{\log}^1(M)$ be the map given by the composite of the inclusion and projection $$V_{\log}^0(M)\subset B_{\log}\otimes_{B_{\log}^{G_K}} M\To \oln B_{\log}\otimes_{B_{\log}^{G_K}}M.$$
We define  $V_{\log}(M):= \ker (\delta(M))$. Then

\begin{proposition} \cite[proposition  2.3]{Andreatta:2012aa}
	\label{prop:LogAdmissible}\hspace{0.1cm}
\begin{itemize}
    \item A filtered $(\phi,N)$-module $M$ over $B_{\log}^{G_K}$ is \emph{admissible} if and only if $V_{\log}(M)$ is a finite dimensional $\Q_p$-vector space and $\delta(M)$ is surjective.
    \item Moreover, if $M$ is admissible then $V:=V_{\log}(M)$ is a finite dimensional, semistable $G_K$-representation and $D_{\log}(V)=M$.
\end{itemize}
\end{proposition}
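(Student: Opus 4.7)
The plan is to adapt the classical Colmez--Fontaine admissibility criterion for weakly admissible filtered $(\phi,N)$-modules to the enlarged log-period ring $B_{\log}$. The functor $V_{\log}$ is built as the kernel of a map from a ``constant cohomology'' piece $V_{\log}^{0}(M)$ to a ``de Rham quotient'' $V_{\log}^{1}(M)$, which parallels the construction of the semistable functor via the classical fundamental exact sequence
\[ 0\to \Qp\to B_{\cris}^{\phi=1}\to B_{\dR}/B_{\dR}^{+}\to 0. \]
Once this parallel is made precise, both directions of the equivalence follow by familiar arguments modulo the logarithmic analogue of Colmez--Fontaine's ``weakly admissible implies admissible'' theorem.

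The first step is to establish the logarithmic version of the fundamental exact sequence,
\[ 0 \to \Qp \to B_{\log}^{\phi=1,\, N=0} \to \oln{B}_{\log}/\Fil^{0}\oln{B}_{\log} \to 0. \]
Using the description $A_{\log}\simeq A_{\cris}\{\langle u-1\rangle\}$, the formula $N_{\log}((u-1)^{[n]}) = (u-1)^{[n-1]} u$ together with $\phi(u)=u^{p}$ force any element fixed by both $\phi$ and $N_{\log}$ to lie in the subring $B_{\cris}^{\phi=1}$; this reduces the claim to the well-known fundamental exact sequence for $B_{\cris}$.

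Granting this, the ``only if'' direction is routine: assume $M$ is admissible, so that there is a $(\phi, N, G_{K})$-compatible filtered isomorphism $B_{\log}\ot_{B_{\log}^{G_{K}}} M \simeq B_{\log}\ot_{\Qp} V$ for some finite-dimensional semistable $G_{K}$-representation $V$. Since $V$ is a finite-dimensional $\Qp$-vector space, tensoring the fundamental sequence with $V$ remains exact and identifies $V$ with $V_{\log}(M)$, forces $\delta(M)$ to be surjective, and provides $D_{\log}(V)=M$. The semistability of $V_{\log}(M)$ results from the embedding $B_{\st}\hookrightarrow B_{\log}$ combined with the identification of $N_{\log}$-nilpotent subrings.

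For the converse, set $V:=V_{\log}(M)$, finite-dimensional by hypothesis, and consider the natural comparison map
\[ \alpha_{M} : B_{\log}\ot_{\Qp} V \To B_{\log}\ot_{B_{\log}^{G_{K}}} M. \]
Injectivity follows from the definition of $V$: any element of the kernel would yield, via the fundamental sequence, a nonzero $\Qp$-element mapping to zero in $M$, which is absurd. The main obstacle is proving surjectivity of $\alpha_{M}$; this is the logarithmic analogue of the Colmez--Fontaine ``weakly admissible implies admissible'' theorem, and requires a Newton/Hodge polygon comparison that must simultaneously track Frobenius slopes and the interaction $\phi\circ N_{\log}= p\,N_{\log}\circ\phi$. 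Once $\alpha_{M}$ is shown to be an isomorphism, the equalities $V_{\log}(M)=V$ and $D_{\log}(V)=M$ fall out immediately, completing the proof.
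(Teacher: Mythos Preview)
The paper does not contain a proof of this proposition: it is stated as a citation of \cite[proposition 2.3]{Andreatta:2012aa} and then used as a black box in the comparison argument that follows (see proposition~\ref{prop:comparison2} and theorem~\ref{thm:comparisonIovitaAndreatta}). There is therefore no proof in this paper against which to compare your proposal.

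Regarding the proposal on its own terms: the architecture you describe---establish a $B_{\log}$-analogue of the fundamental exact sequence, deduce the easy direction by tensoring, and reduce the hard direction to a Colmez--Fontaine-type statement---is the correct shape of the argument. But you explicitly leave the surjectivity of $\alpha_{M}$ unproved, calling it ``the main obstacle'' and saying it ``requires a Newton/Hodge polygon comparison''; so what you have written is a plan rather than a proof. One further point of caution: in the actual reference the hard direction is not handled by rerunning a slope argument directly over $B_{\log}^{G_{K}}$, but by transporting the problem back to filtered $(\phi,N)$-modules over $K_{0}$ (via the specialization $Z\mapsto \pi$ and the connection) and then invoking the classical Colmez--Fontaine theorem \cite{Colmez:2000vq}. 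Your sketch suggests a direct polygon argument over the larger ring, which would be substantially harder and is not what is done.
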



\subsection{The Comparison Isomorphisms} \label{ssub:the_comparison_isomorphisms}

We consider on $X$ the induced logarithmic structure given by its special fiber $\oln X$, which is a normal crossing divisor and we give to ${\oln X}$ the pull back logarithmic structure as in section~\eqref{sec:logarithmic_structures} denoted by $X^{\log}$ and $\oln X^{\log}$ as usual.

Let $S^{\log}:=\spec{W[[t]]}^{\log}$ where $W=W(k)$ and the logarithmic structure
 on $S$ is the induced by the pre logarithmic structure $\N\to W[[t]]; \  n\to t^n$. We have seen on theorem~\eqref{thm:liting of semistable K3} that the deformation $\mathcal{X}^{\log}\to  S^{\log}$ of the special fiber ${\oln X}$ may be chosen such that it has the properties as in theorem~\eqref{thm:liting of semistable K3}:

 \begin{itemize}
    \item  $(\mathcal X\ot_W K_0)_0$ is a combinatorial $K3$-surface over $K_0$ of the same type of $\oln X$.

    \item For every point $x\in S\ot_W K_0$, with $x\ne 0$, then $(\mathcal X\ot_W K_0)_x$ is a smooth $K3$-surface over $k(x)$.
\end{itemize}

 Remember that $\bb Y:= (\mathcal X\ot_W K_0)_0$ is of the same type of $\oln X$,  in particular ${\oln{X}}$ is smooth if and only if $\bb Y$ is smooth. 
  We consider on $\mathcal X_{K_0}=\mathcal X\ot_W K_0$ the logarithmic structure defined by the divisor with normal crossings $\bb Y\to \mathcal X_{K_0}$ and on $\bb Y$ the inverse image logarithmic structure.

 Denote by $D:=H^2_{\log-dR}(\bb Y)$ the logarithmic de Rham cohomology of $\bb Y/K_0$. Then $D$ has a natural structure of filtered, $(\phi, N)$-module over $K_0$ obtained by its identification with $H^2_{log-\cris}({\oln X}/W)[1/p]$ with the logarithmic crystalline cohomology of ${\oln X}$ over $W$. More precisely the structure of filtered $(\phi,N)$-module of $D$ can be explicitly described as follows:

 Let $\mathcal H = H^2_{log-\dR}(\mathcal X /S)$ denote the locally free $\OO_{S}$-module of relative logarithmic de Rham cohomology of $\mathcal X$ over $S$. It is endowed with a logarithmic integrable connection $\nabla$, the Gauss-Manin connection, and a Frobenius $\phi$. Moreover, $\mathcal H$ can be naturally identified with $H^2_{\log-\cris}({\oln X}/W[[t]])[1/p]$ therefore we have the identifications:

 \begin{itemize}
     \item $\mathcal H_0:=\mathcal H/t\mathcal H \simeq D$;

     \item $\mathcal H_0\ot_{K_0} K \simeq H^2_{\dR}(X_K)$.
 \end{itemize}

 Hence, we  have natural identifications $D_K:=D\otimes_{K_0} K \simeq H^2_{dR}(X_K)$ and so we define the filtration on $D_K$ to be the inverse image of the Hodge filtration on $H_{dR}^2(X_K)$.

 Moreover we define the $p$-adic  monodromy operator $N_p$ on $D$ to be the residue of $\nabla$ modulo $t\mathcal H$ and the Frobenius $\phi_0$ on $D$ to be the reduction modulo $t\mathcal H$.

 We also denote by $V:= H^2_{\et}(X_{\oln K},\Q_p) $; it is a $p$-adic  $G_K$-representation.

In \cite[sec. 2.4.9]{Andreatta:2012aa} is proved a more general version of the following theorem, that specialized to our case reads as follows:

 \begin{theorem}[Comparison Isomorphisms] \cite[theorem 2.33 and proposition 2.37]{Andreatta:2012aa}
	 \label{thm:comparisonIovitaAndreatta}
	 
 $V$ is a semistable $G_K$-representation and we have a natural isomorphism of filtered, $(\phi,N)$-modules: $D_{st}(V)\simeq D$.
 \end{theorem}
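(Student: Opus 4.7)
The plan is to deduce the theorem by applying the general $B_{\log}$-comparison machinery of \cite{Andreatta:2012aa} to the specific geometric setup furnished by Theorem~\ref{thm:liting of semistable K3}. First, I would verify that the family $\mathcal X^{\log}\to S^{\log}$ fits the hypotheses of their comparison theorem: it is proper and logarithmically smooth, $S^{\log}=\spec{W[[t]]}^{\log}$ carries the standard log structure $\N\to W[[t]],\ 1\mapsto t$, and the special fiber at $t=0$ is the SNCL $K3$-surface ${\oln X}$, while the generic fiber at $t=\pi$ recovers $X_K$ with the log structure coming from the divisor ${\oln X}\subset X$.

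Next, I would assemble the filtered $(\phi,N)$-module structure on $D$. The relative logarithmic de Rham cohomology $\mathcal H=H^2_{\log-\dR}(\mathcal X/S)$ is a locally free $\OO_S$-module endowed with the Gauss–Manin connection $\nabla$, a Hodge filtration, and a Frobenius inherited from the identification $\mathcal H\simeq H^2_{\log-\cris}({\oln X}/W[[t]])[1/p]$. Reduction modulo $t$ yields $D$ as a filtered $(\phi,N)$-module over $K_0$, with $N_p$ the residue of $\nabla$; specialization at $t=\pi$ gives $H^2_{\dR}(X_K)$ and hence the filtration on $D_K$ via the Hodge filtration on $H^2_{\dR}(X_K)$. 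The module $M:=B_{\log}^{G_K}\otimes_{W[[t]]}\mathcal H$, where $W[[t]]\to B_{\log}^{G_K}$ sends $t\mapsto [\oln\pi]=uZ$, is then a finite free filtered $(\phi,N)$-module over $B_{\log}^{G_K}$.

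The key step is to invoke the $B_{\log}$-comparison isomorphism of \cite[theorem 2.33]{Andreatta:2012aa}, which gives a canonical $G_K$-equivariant, Frobenius-equivariant, filtration-respecting isomorphism
\[
B_{\log}\otimes_{B_{\log}^{G_K}} M \xrightarrow{\ \sim\ } B_{\log}\otimes_{\Qp} V,
\]
together with compatibility with the monodromy via Leibniz rule. Combining this with the admissibility criterion (Proposition~\ref{prop:LogAdmissible}) shows that $V$ is $B_{\log}$-admissible and $D_{\log}(V)\simeq M$, hence (taking the piece where the derivation vanishes after inverting $t$ and passing to $K_0$) $D_{\log}(V)/t\simeq D$ as filtered $(\phi,N)$-modules.

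Finally, to upgrade $B_{\log}$-admissibility to semistability, I would invoke Kato's theorem \cite[theorem 3.7]{Kato:1994we} identifying $B_{\st}$ with the largest subring of $B_{\log}$ on which $N_{\log}$ acts locally nilpotently: since the semistable reduction $\oln X$ forces the residue $N_p$ of $\nabla$ on $D$ to be nilpotent (by the quasi-unipotent monodromy in the log smooth situation), the whole comparison descends into $B_{\st}$, proving that $V$ is a semistable $G_K$-representation with $D_{\st}(V)\simeq D$. The main obstacle will be checking all the compatibilities carefully: that the Andreatta--Iovita framework truly applies to our log smooth $K3$-family over $W[[t]]$ (rather than just over $\OK$), that the specialization of $\mathcal H$ simultaneously recovers both $D$ (at $t=0$) and $H^2_{\dR}(X_K)$ (at $t=\pi$) compatibly with Frobenius and filtration, and that the monodromy on $M$ is genuinely nilpotent so as to land inside $B_{\st}$.
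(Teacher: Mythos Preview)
Your overall shape is right---set up $M$, invoke the Andreatta--Iovita machinery, apply the admissibility criterion of Proposition~\ref{prop:LogAdmissible}---but there is a genuine confusion in the geometric input that would derail the argument as written.

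The family $\mathcal X^{\log}\to S^{\log}$ of Theorem~\ref{thm:liting of semistable K3} is an \emph{abstract} deformation of the special fiber $\oln X$; it has no reason to contain the original semistable model $X$ or its generic fiber $X_K$. So your claim that ``the generic fiber at $t=\pi$ recovers $X_K$'' is false, and consequently your definition $M:=B_{\log}^{G_K}\otimes_{W[[t]]}\mathcal H$ via $t\mapsto [\oln\pi]$ is not the right object (note also that $[\oln\pi]=uZ$ is not $G_K$-invariant, so this map does not even land in $B_{\log}^{G_K}$). The identification $D\otimes_{K_0}K\simeq H^2_{\dR}(X_K)$ in the paper is \emph{not} obtained by specializing the family at $t=\pi$; it comes from the log-crystalline/de~Rham comparison for the original semistable model $X\to\spec{\OO_K}$, i.e.\ $D\simeq H^2_{\log\text{-}\cris}(\oln X/W)[1/p]$ together with Hyodo--Kato.

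The paper's argument keeps the two roles separate. The family $\mathcal X\to S$ is used only to endow $D=H^2_{\log\text{-}\dR}(\bb Y)$ with its $(\phi,N)$-structure (Frobenius as reduction mod $t$ of the crystalline Frobenius on $\mathcal H$, monodromy as the residue of the Gauss--Manin connection). One then sets $M:=D\otimes_{K_0}B_{\log}^{G_K}$---a base change over $K_0$, not over $W[[t]]$---and proves directly, using the fundamental exact diagram of Fontaine sheaves on Faltings' site associated to the pair $(X,X_K)$, that (a) $M$ is admissible and (b) $V\simeq V_{\log}(M)$. Proposition~\ref{prop:LogAdmissible} then yields $D_{\log}(V)=M$, hence $D_{\st}(V)\simeq D$. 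Your final step invoking \cite[theorem~3.7]{Kato:1994we} to pass from $B_{\log}$ to $B_{\st}$ is in the right spirit, but in the paper this is absorbed into the conclusion of Proposition~\ref{prop:LogAdmissible}, which already asserts that $V$ is semistable.
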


 For the proof, \cite{Andreatta:2012aa} considers $M:=D\ot_{K_0}(B_{\log}^{G_K})$ with its induced filtered $(\phi, N)$-module structure and proves that:
 \begin{itemize}
     \item[a)] $M$ is an admissible filtered $(\phi, N)$-module and
     \item[b)] $V$ and  $V_{\log}(M)$ are isomorphic as $G_K$-representations.
 \end{itemize}
 
 In order to prove $a$ and $b)$ \cite{Andreatta:2012aa} use the ``fundamental exact diagram'' of Fontaine Shaves on Faltings' site associated to the pair $(X, X_K)$~\cite[sec 2.3.9]{Andreatta:2012aa}.
  
   Proposition~\eqref{prop:LogAdmissible} (see also \cite[proposition 2.1]{Andreatta:2015aa}) now implies that:
  \begin{proposition}\label{prop:comparison2}
 $$D_{\log}(V):=M=D\otimes_{K_0} B_{\log}^{G_K}$$ and so $D\simeq D_{st}(V)$, as filtered, Frobenius, monodromy modules. In particular there is an identification $N_p=N_{st}$.
  \end{proposition}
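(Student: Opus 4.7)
The plan is to deduce the statement from the admissibility criterion, Proposition~\ref{prop:LogAdmissible}, using the two facts (a) and (b) established immediately before the proposition: that $M := D \otimes_{K_0} B_{\log}^{G_K}$ is an admissible filtered $(\phi,N)$-module and that $V$ is isomorphic to $V_{\log}(M)$ as a $G_K$-representation.

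First, I apply Proposition~\ref{prop:LogAdmissible} directly to $M$. Admissibility together with the isomorphism $V \cong V_{\log}(M)$ yields at once that $V$ is a finite-dimensional semistable $G_K$-representation and that
\[
D_{\log}(V) \;=\; M \;=\; D \otimes_{K_0} B_{\log}^{G_K},
\]
as filtered $(\phi,N)$-modules over $B_{\log}^{G_K}$. This is the first assertion and simultaneously recovers the semistability claim of Theorem~\ref{thm:comparisonIovitaAndreatta}.

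Next, I descend from $D_{\log}(V)$ to $D_{st}(V)$. By the theorem of F.~Kato recalled after Definition~\ref{def:BlogMonodromy}, $B_{st}$ is the maximal subring of $B_{\log}$ on which $N_{\log}$ is locally nilpotent; for a semistable $V$ this yields a canonical isomorphism
\[
D_{\log}(V) \;\cong\; D_{st}(V) \otimes_{K_0} B_{\log}^{G_K},
\]
compatible with filtration, Frobenius, and monodromy. Combining this with the previous display and using faithful flatness of $K_0 \to B_{\log}^{G_K}$ (equivalently, the retraction $Z \mapsto \pi$ that cuts out $B_{st}$ inside $B_{\log}$), I obtain an isomorphism $D \cong D_{st}(V)$ of filtered $(\phi,N)$-modules over $K_0$. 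Compatibility of the filtration after $\otimes_{K_0} K$ is built in via the identification $D_K \cong H^2_{\dR}(X_K)$ used to define the Hodge filtration on $D$ in the construction preceding Theorem~\ref{thm:comparisonIovitaAndreatta}.

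It remains to match the two monodromy operators. By construction $N_p$ on $D = \mathcal H / t\mathcal H$ is the residue modulo $t$ of the Gauss--Manin connection $\nabla$; under the comparison this residue corresponds, after restricting $N_{\log}$ from $B_{\log}$ down to $B_{st}$, to $N_{st}$ on $D_{st}(V)$, giving $N_p = N_{st}$. The main obstacle is purely bookkeeping: one must track $B_{\log}^{G_K}$-linearity, Frobenius, filtration, and the two monodromy operators simultaneously through the descent. This is precisely what the Andreatta--Iovita framework accomplishes, so no genuinely new computation is needed beyond assembling the pieces.
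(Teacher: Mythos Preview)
Your proposal is correct and follows essentially the same approach as the paper: the paper's entire argument is the single sentence ``Proposition~\ref{prop:LogAdmissible} now implies that'' applied to the facts (a) and (b) just established, which is exactly what you do. Your additional paragraph spelling out the descent from $D_{\log}(V)$ to $D_{st}(V)$ via the nilpotent-monodromy characterization of $B_{st}\subset B_{\log}$ merely makes explicit a step the paper leaves to the reader.
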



\section{The Main Theorem} \label{sec:the_main_theorem_v2}

The following theorem  is an analogue of the~\cite{Kulikov:1977aa,Persson:1981wp,MR0466149}-classification theorem of the central fiber of a semistable degeneration of complex $K3$-surfaces in terms of the monodromy, but now over a $p$-adic  field.

The new part of the theorem is that we can distinguish the three possible types of the special fiber of a semistable $K3$-surface over a $p$-adic  field $K$, in terms of the ($p$-adic) monodromy operator $N_{st}$ on $D_{st}(H^2_{\et}(X_{\oln K},\Qp))$. As a consequence of this result, we get a criterion for the good reduction of the semistable $K3$-surface in terms of the $p$-adic representation $H^2_{\et}(X_{\oln K},\Qp)$ analogous to the~\cite[theorem II.4.7]{Coleman:1999aa} for abelian varieties and~\cite[theorem 1.6]{Andreatta:2015aa} for curves.

\begin{theorem}\label{thm:MainTheorem}
Let $X_K\to \spec{K}$ be a smooth projective $K3$-surface and let $X\to \spec{ \mathcal O_K}$ be a semistable minimal model of $X_K$ (or we can just assume that $X_K$ has combinatorial-reduction as in definition~\eqref{def:sncl-reduction}). Let ${\oln X}$ be the special fiber of $X$. 
Let $D_{st}=D_{st}( H^2_{\et}(X_{\oln K},\Qp))$ and $N_{st}:D_{st}\to D_{st}$
be the monodromy operator on $D_{st}$. Then we have $3$ possibilities for the special fiber ${\oln X}$, distinguished in terms of the nilpotency degree of the monodromy operator $N_{st}$, as follows:
\begin{itemize}
    \item[I.]  $N_{st}=0$ if and only if
     ${\oln X}$ is a nonsingular $K3$ surface.
    \item[II.]  $N_{st}\ne 0$ but $N^2_{st}=0$ if and only if
     ${\oln X}= \cup_{i=1}^n V_i$ where the $V_i$ are rational surfaces and $V_2, \ldots, V_{n-1}$ are elliptic ruled surfaces.
    \item[III.] $N_{st}^2\ne 0$ but $N_{st}^3=0$ if and only if
     ${\oln X}= \cup_{i=1}^n V_i$ where all the $V_i$ are rational surfaces.
\end{itemize}
That is, the type of the combinatorial special fiber $\oln X$ can be distinguish in terms of the monodromy operator $N_{st}$ in exactly the same way as in \cite{MR0466149} and \cite{Kulikov:1977aa} for the complex case.
\end{theorem}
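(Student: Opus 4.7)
The plan is to use the one-parameter deformation provided by Theorem~\ref{thm:liting of semistable K3} to bridge the $p$-adic and complex pictures, and then to transport the classical Kulikov--Persson--Pinkham classification (Theorem~\ref{thm:KulikovClassification}) back via the comparison isomorphism of Theorem~\ref{thm:comparisonIovitaAndreatta}. By Nakkajima's Proposition~\ref{prop:SNCLCombinatorial}, the three combinatorial types for $\oln X$ form a mutually exclusive and exhaustive trichotomy; together with the fact that the monodromy theorem bounds the nilpotency degree of $N_{\st}$ by $3$, it will be enough to prove that the type of $\oln X$ determines the nilpotency class of $N_{\st}$ in the asserted way.

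First, I apply Theorem~\ref{thm:liting of semistable K3} to obtain a flat projective deformation $\mathcal{X}\to S=\spec{W[[t]]}$ whose central $K_0$-fiber $\bb Y:=(\mathcal X\otimes_W K_0)_0$ is a combinatorial $K3$-surface of the same type as $\oln X$ and whose remaining $K_0$-fibers are smooth. Appealing to the comparison isomorphism (Theorem~\ref{thm:comparisonIovitaAndreatta} and Proposition~\ref{prop:comparison2}), the filtered $(\phi,N)$-module $D_{\st}$ is canonically identified with $H^{2}_{\log-\dR}(\bb Y/K_0)$, and under this identification $N_{\st}$ corresponds to the residue $N_p$ at $t=0$ of the logarithmic Gauss--Manin connection on $\mathcal{H}=H^{2}_{\log-\dR}(\mathcal X/S)$. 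The problem is thereby reduced to computing the nilpotency class of $N_p$ in terms of the combinatorial type of $\bb Y$.

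Next, I transfer the picture to the complex analytic category. Fixing an embedding $K_0\hookrightarrow\C$ and base changing to $\spec{\C[[t]]}$, an algebraization argument based on Grothendieck's formal GAGA and on the projectivity of $\mathcal X/S$ yields a projective semistable degeneration $\mathcal Y\to\Delta$ of complex $K3$-surfaces whose central fiber is combinatorial of the same type as $\oln X$. After a birational modification (Theorem~\ref{thm:Persson-Pinkham-Kulikov}) I may assume $\mathcal Y\to\Delta$ is a Kulikov model; Theorem~\ref{thm:KulikovClassification} then determines the nilpotency class of the complex monodromy $N_{\C}=\log T$ on $H^{2}(\mathcal Y_t,\C)$ precisely according to the three types. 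It remains to compare $N_p$ with $N_{\C}$: both operators are residues at $t=0$ of the (logarithmic) Gauss--Manin connection on the relative de Rham cohomology of the same family, the first computed algebraically over $K_0[[t]]$ and the second analytically over $\Delta$. The flat base change $K_0[[t]]\hookrightarrow \C\{t\}$, GAGA, and the Riemann--Hilbert correspondence for regular singular connections identify the two residues, so $N_p$ and $N_{\C}$ have the same nilpotency class; combined with the previous step this yields the theorem.

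The main obstacle is the last identification. Moving from the formal projective family over $\C[[t]]$ to an honest analytic degeneration over the disk requires a careful algebraization, and matching the residue of the algebraic logarithmic Gauss--Manin connection with the logarithm of the topological Picard--Lefschetz operator rests on the Riemann--Hilbert correspondence in the semistable setting, or equivalently on Steenbrink's theory of limiting mixed Hodge structures. Once these pieces are in place, the three cases of the theorem fall out of the complex classification transported back through the comparison isomorphism.
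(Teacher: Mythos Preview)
Your strategy matches the paper's almost exactly: deform via Theorem~\ref{thm:liting of semistable K3}, identify $N_{\st}$ with the residue $N_p$ of the Gauss--Manin connection using Proposition~\ref{prop:comparison2}, base change along an embedding $K_0\hookrightarrow\C$, and then invoke the complex monodromy criterion together with Deligne's identification of the analytic monodromy with the residue of $\nabla$. Two points where your write-up diverges from the paper are worth flagging. First, the passage from the formal family over $\C[[t]]$ to an analytic family over $\Delta$ is not ``formal GAGA'' but Artin approximation: the paper produces a family $\mathcal Y\to\Delta$ whose base change to $\C[[t]]$ agrees with $\mathcal X_\C$ modulo $t^m$ for some large $m$, and it is precisely this congruence that lets one match the residues $N_{an}$ and $N_\C$ (Grothendieck existence alone does not give a family over a disk). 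Second, the paper does not perform a birational modification to a Kulikov model; it applies the Clemens--Schmid monodromy criterion directly to $\mathcal Y\to\Delta$, whose central fiber is already $Y_\C^{an}$, a combinatorial $K3$ of the same type as $\oln X$. Your detour through Theorem~\ref{thm:Persson-Pinkham-Kulikov} is harmless because Nakkajima's construction already yields trivial relative dualizing sheaf, but as written you would owe an argument that the modification does not change the combinatorial type of the central fiber---otherwise the chain ``type of $\oln X$ $=$ type of $\mathcal Y_0$ $=$ type of the Kulikov central fiber'' has a gap in the last equality.
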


\begin{proof}
 As $X\to\spec{\mathcal O_K}$ is a minimal semistable model of $X_K\to \spec{K}$ (see~\cite[corollary [3.7]{MR1397057}), the special fiber is a SNCL $K3$-surface~\cite[theorem 4.1]{Maulik:2014aa} and~\cite[section 2 and 3]{Nakkajima:2000tl} and therefore it is a combinatorial $K3$-surface by proposition~\eqref{prop:SNCLCombinatorial} (or just assuming that we have combinatorial reduction), i.e. it is of type I, II or III. So the remaining thing to prove is that we can distinguish these 3 cases in terms of the nilpotency degree of the monodromy operator $N_{st}$.

\textbf{Step 1}.
We consider on $X$ the induced logarithmic structure given by its special fiber $\oln X$, which is a normal crossing divisor and we give to ${\oln X}$ the pull back logarithmic structure as in section~\eqref{sec:logarithmic_structures} denoted by $X^{\log}$ and $\oln X^{\log}$ as usual.

Remember that by theorem~\eqref{thm:liting of semistable K3} there exists a deformation $$\mathcal X \to S:=\spec{W[[t]]}$$ of  $\oln X$ such that:
\begin{itemize}
    \item If we let $0$ denote the point of $S\ot_W K_0$ corresponding to the maximal ideal $t(W[[t]]\ot_W K_0)$. Then $\bb Y:=(\mathcal X\ot_W K_0)_0$ is a combinatorial $K3$-surface over $K_0$ of the same type of $\oln X$.

    \item For every point $x\in S\ot_W K_0$, with $x\ne 0$, then $(\mathcal X\ot_W K_0)_x$ is a smooth $K3$-surface over $k(x)$.
\end{itemize}

 We considered on $\mathcal X_{K_0}:=\mathcal X\ot_W K_0$ the logarithmic structure defined by the divisor with normal crossings $\bb Y\to \mathcal X\ot_W K_0$ and on $\bb Y$ the inverse image logarithmic structure.

 Denote by $D:=H^2_{\log-dR}(\bb Y)$ the logarithmic de Rham cohomology of $\bb Y/K_0$ where $K_0 = \Frac(W(k))$. Then $D$ has a natural structure of filtered, $(\phi, N)$-module over $K_0$ obtained by its identification of $H^2_{\log-\cris}({\oln X}/W)[1/p]$ with the logarithmic crystalline cohomology of ${\oln X}$ over $W$.

By proposition~\eqref{prop:comparison2}, the monodromy operator $N_{st}$ on $D_{st}(H^2_{\et}(X_{\oln K},\Qp))$ can be identified with the residue $N_p$ of the Gauss-Manin connection $\nabla$ modulo $t\mathcal H$,  that is $N_p$ is an endomorphism of $H^2_{\log-\dR}(\bb Y/S[1/p]))$.

\textbf{Step 2.} Now fix once and for all an embedding of $K_0\to \C$. Consider the base change of $\mathcal X_{K_0}:=\mathcal X\ot_W K_0$ with respect to the induced embedding $W[[t]]\ot_W K_0\to \C[[t]]$. We have a complex family $\mathcal X_\C:=\mathcal X_{K_0}\ot \C\To \spec{\C[[t]]}$ with special fiber $Y_\C=\bb Y\ot \C \to \spec{\C}$ a combinatorial $K3$-surface and generic fiber  a smooth $K3$-surface  $X_{\C((t))}$.  Let $\mathcal S=S[1/p]\ot\C=\spec{\C[[t]]}$.

We endow $\mathcal X_\C, \mathcal S, Y_{\C} $ with the usual logarithmic structures and, we denote them as  $\mathcal X_\C^{\log}, S^{\log}$,\\ $Y_{\C}^{\log}$ respectively.

Consider now the logarithmic de Rham cohomology $H^2_{\log-\dR}(\mathcal X_\C^{\log}/\mathcal{S}^{\log})$, it is a free $\mathcal O_{\mathcal S}$-module or rank $22$ with an integrable logarithmic connection (The logarithmic Gauss-Manin connection):
\[
\nabla: H^2_{\log-\dR}(\mathcal X_\C^{\log}/\mathcal S^{\log}))\To H^2_{\log-\dR}(\mathcal X_\C^{\log}/\mathcal S^{\log}))\otimes_{ \mathcal O_{\mathcal S}} \omega^1_{\mathcal S/\C}.
\]
The fiber of $ H^2_{\log-\dR}(\mathcal X_\C^{\log}/\mathcal S^{\log}))$ at the special point is $ H^2_{\log-\dR}(Y_{\C}^{\log})$ that is
\[
H^2_{\log-\dR}(Y_{\C}^{\log}) \simeq H^2_{\log-\dR}(\mathcal X_\C^{\log}/\mathcal S^{\log}))/ t H^2_{\log-\dR}(\mathcal X_\C^{\log}/\mathcal S^{\log})).
\]

We also have the operator $N_\C:=\Res_{t=0}\nabla$ which is a $\C$-linear, nilpotent operator on $H^2_{\log-\dR}(Y_{\C}^{\log})$.

Let us notice that the pair $\left( H^2_{\log-\dR}(Y_\C^{\log}/\C^{\log}), N_\C \right)$ is the base change to $\C$, via the embedding $K_0\subset \C$, of the pair $\left( H^2_{\log-\dR}(\bb Y^{\log}/K_0^{\log}), N_p\right)$.

\textbf{Step 3.} Now we use~\cite{Artin:1969wg}:

We associate to the family $\mathcal X_\C\to \mathcal S=\spec{\C[[t]]}$ above a family of $K3$-surfaces $\mathcal Y\to \Delta$, over the complex open unit disk $\Delta$. This family has the property that if we base change it over $\mathcal S$, we obtain a  family $\mathcal{X}'_{\C}$ which is congruent to $\mathcal{X}_\C$ modulo $t^m\C[[t]]$ for some large $m>1$. It follows that:
\begin{itemize}
    \item $\mathcal Y|_{\Delta-\set{0}}$ is a smooth projective family of $K3$-surfaces.
    \item The central fiber $\mathcal Y_0\simeq \mathcal{X}'_0 \simeq Y_{\C}^{an}$.
\end{itemize}
Here $Y_{\C}^{an}$ denotes the complex analytic variety associated to the complex points of $Y_{\C}$ (the usual GAGA functor).

Now we use the Monodromy criterion~\cite[pag. 112]{Morrison:1984uh} given by the Clemens-Schmidt exact sequence  to the family $\mathcal Y\to \Delta$  (this  criterion leads to the proof  of the classical~\citeauthor{Kulikov:1977aa,Persson:1981wp,MR0466149}-classification theorem, as we can see in ~\cite[pag. 113]{Morrison:1984uh}).

Consider the GAGA functor $an$ sending a complex algebraic $Z$ variety to its associated complex analytic variety $Z^{an}$. We let $N_{an}$ be the monodromy operator on $H^2_{\log-\dR}(Y_{\C}^{\log,an})$. By~\cite{deligne14004equations} it can be seen, up to non-zero constant, as the residue at zero of the Gauss-Manin connection:
{\footnotesize{\[
\nabla_{an}: H^2_{\log-\dR}(\mathcal Y/\Delta^{\log})\To H^2_{\log-\dR}(\mathcal Y/\Delta^{\log})\otimes \omega^1_{\mathcal Y/\Delta}.
\]}}
Therefore, $N_{an}$ can be seen (by the previous analysis) as the residue of the Gauss-Manin connection $\nabla'$ on $ H^2_{\log-\dR}((\mathcal{X}_{\C}')^{\log}/\mathcal S^{\log})$. But we have:
{\footnotesize{\begin{equation}\label{eq:dridentification}
    H^2_{\log-\dR}((\mathcal{X}_{\C}')^{\log}/\mathcal S^{\log})/ t^m H^2_{\log-\dR}((\mathcal {X}_{\C}')^{\log}/\mathcal S^{\log}) \simeq H^2_{\log-\dR}(\mathcal X_{\C}^{\log}/\mathcal S^{\log}))/t^m H^2_{\log-\dR}(\mathcal X_{\C}^{\log}/\mathcal S^{\log}))
\end{equation}}}
and $\nabla'\equiv \nabla \pmod{t^m \C[[t]]}$.

This implies that the residue of $\nabla_{an}$ $\nabla$ and $\nabla'$ are the same under the identification
\[
H^2_{\log-\dR}(\mathcal Y_0)\simeq H^2_{\log-\dR}(Y_{\C}^{\log,an}) \simeq H^2_{\log-\dR}((\mathcal {X}'_0)^{\log}).
\]
In other words $N_{an}=N_{\C}$, which is the base change to $\C$ of $N_p$.

Now we apply the description of $\mathcal Y_0$ in therms of $N_{an}$ for the family $\mathcal Y$ given by the Clemens-Schmidt exact sequence~\cite[pag. 113]{Morrison:1984uh}. So if $N_{an}=0$ then $\mathcal Y_0$ is of type I. If $N_{an}\ne 0$ but $N_{an}^2=0$ then $\mathcal Y_0$ is of type II and if $N_{an}^2\ne 0$ but $N_{an}^3=0$ then $\mathcal Y_0$ is of type III. Where the type I, type II and type III are as in the theorem~\eqref{thm:MainTheorem}.

Since $Y_{\C}^{an} =\mathcal Y_0$, then also $Y_{\C}$ is of the same type, and since $\bb Y\ot \C = Y_{\C}$ we have that also $\bb Y$ is of the same type, hence $\oln X$ is of the same type.

Moreover we have seen that  $N_\C=N_{an}=N_p=N_{st}$ up to constants. Which implies that we can distinguish the three possible types of $\oln X$ in terms of the nilpotency degree on $N_{st}$ as stated in the theorem.
\end{proof}

The following theorem, which is in fact a corollary of theorem~\eqref{thm:MainTheorem}, is a generalization of~\cite[theorem II.4.7]{Coleman:1999aa} for abelian varieties. 

\begin{theorem}[Corollary]\label{thm:MainTheoremCrystalline}
Let $X_K\to \spec{K}$ be a semistable $K3$-surface over the $p$-adic  field $K$ with minimal semistable integral model $X\to\spec{\mathcal O_K}$ and with projective special fiber $\oln X$ over the algebraic closed field $k=\mathcal O_K/\pi \mathcal O_K$. Let $V:= H^2_{\et}(X_{\oln K},\Qp)$. Then $X_K$ has good reduction (i.e $\oln X$ is smooth), if and only if $V$ is a crystalline representation of $G_K:=\Gal(\oln K, K)$.
\end{theorem}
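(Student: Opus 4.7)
The plan is to deduce this criterion as a direct consequence of the Main Theorem, using the standard fact relating crystalline and semistable representations. Since by hypothesis $X_K$ admits a semistable model $X\to\spec{\mathcal O_K}$, the comparison isomorphism of Andreatta--Iovita (Theorem \ref{thm:comparisonIovitaAndreatta} together with Proposition \ref{prop:comparison2}) tells us that $V=H^2_{\et}(X_{\oln K},\Qp)$ is automatically a semistable $G_K$-representation, and that under the identification $D_{st}(V)\simeq D=H^2_{\log\text{-}\dR}(\bb Y)$, the Fontaine monodromy operator $N_{st}$ corresponds to the $p$-adic monodromy $N_p$ studied in the proof of Theorem \ref{thm:MainTheorem}.

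First, I would invoke Remark \ref{rem:st-cris}: a semistable $G_K$-representation is crystalline if and only if the monodromy operator on its associated $(\phi,N)$-module vanishes. In particular, the semistable representation $V$ is crystalline if and only if $N_{st}=0$ on $D_{st}(V)$. This is the only place where a general $p$-adic Hodge theory input (beyond what is already absorbed into the Main Theorem) enters the argument.

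Next, I would apply Theorem \ref{thm:MainTheorem} in the case $N_{st}=0$. According to part (I) of that classification, $N_{st}=0$ holds if and only if the special fiber $\oln X$ is a nonsingular $K3$-surface, i.e.\ if and only if $X_K$ has good reduction. Chaining the two equivalences yields the desired statement: $X_K$ has good reduction $\Longleftrightarrow N_{st}=0 \Longleftrightarrow V$ is crystalline.

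There is no real obstacle here beyond what has already been overcome in proving Theorem \ref{thm:MainTheorem}; the corollary is a clean rephrasing in the language of $p$-adic representations. One small point to verify is the direction ``good reduction $\Longrightarrow V$ crystalline'' without going through the monodromy chain: this could alternatively be obtained from Faltings' $C_{\cris}$-conjecture applied to the smooth proper model, but the route via the Main Theorem is more uniform and keeps the argument self-contained within the framework developed in the paper.
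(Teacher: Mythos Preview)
Your proposal is correct and follows essentially the same approach as the paper's own proof: invoke Remark~\ref{rem:st-cris} to translate ``crystalline'' into ``$N_{st}=0$'', and then apply part~I of Theorem~\ref{thm:MainTheorem}. The only cosmetic difference is that the paper treats the implication ``good reduction $\Rightarrow$ crystalline'' by directly citing Faltings' $C_{\cris}$ theorem rather than running through the biconditional of the Main Theorem, whereas you use the Main Theorem for both directions and mention Faltings only as an alternative; either route is valid.
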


\begin{proof}
If $\oln X$ is smooth, that is if $X_K$ has good reduction, then this is the theorem of Faltings~\cite{MR1463696}. 

Now assume that $V$ is crystalline representation of $G_K$. Then $V$ is $B_{\cris}$-admissible, but the $B_{\cris}$-admissible representations are those semistable representations for which $N_{st}=0$ (remark~\eqref{rem:st-cris}).  So by theorem~\eqref{thm:MainTheorem} $\oln X$ is of type I, that is, $\oln X$ is smooth and so $X_K$ has good reduction.
\end{proof}

\bibliographystyle{alpha}
\renewcommand{\bibname}{References} 
 \bibliography{paper4arvix}
\end{document}